\useunder{\uline}{\ul}{}
\newtheorem{proposition}{Proposition}
\newtheorem{theorem}{Theorem}
\newtheorem{lemma}{Lemma}
\newtheorem{remark}{Remark}
\DeclareMathOperator*{\Argmax}{Arg\!\max}
\DeclareMathOperator*{\convex}{conv}
\DeclareMathOperator*{\rr}{\mathbb{R}}
\title{New SOCP relaxation and branching rule for bipartite bilinear programs}
\author[1]{Santanu S. Dey\thanks{santanu.dey@isye.gatech.edu}}
\author[1]{Asteroide Santana\thanks{asteroide.santana@gatech.edu}}
\author[2]{Yang Wang\thanks{yang.wang@ce.gatech.edu}}
\affil[1]{\small School of Industrial and Systems Engineering, Georgia Institute of Technology} 
\affil[2]{\small School of Civil and Environmental Engineering, Georgia Institute of Technology}
\begin{document}
\maketitle
\begin{abstract} A bipartite bilinear program (BBP) is a quadratically constrained quadratic optimization problem where the variables can be partitioned into two sets such that fixing the variables in any one of the sets results in a linear program. We propose a new second order cone representable (SOCP) relaxation for BBP, which we show is stronger than the standard SDP relaxation intersected with the boolean quadratic polytope. We then propose a new branching rule inspired by the construction of the SOCP relaxation. We describe a new application of BBP called as the finite element model updating problem, which is a fundamental problem in structural engineering. Our computational experiments on this problem class show that the new branching rule together with an polyhedral outer approximation of the SOCP relaxation outperforms a state-of-the-art commercial global solver in obtaining dual bounds. 
\end{abstract}

\section{Introduction: Bipartite bilinear program (BBP)}\label{sec:intro}
A quadratically constrained quadratic program (QCQP) is called as a bilinear optimization problem if every degree two term in the constraints and objective involves the product of two distinct variables.  For a given instance of bilinear optimization problem, one often associates a simple graph constructed as follows: The set of vertices corresponds to the variables in the instance and there is an edge between two vertices if there is a degree two term involving the corresponding variables in the instance formulation. Strength of various convex relaxations for bilinear optimization problems can be analyzed using combinatorial properties of this graph~\cite{LuedtkeNL12,BolandDKMR17,GupteKRW17}. 

When this graph is bipartite, we call the resulting bilinear problem as a bipartite bilinear program (BBP). In other words, BBP is an optimization problem of the following form:
\begin{eqnarray}\label{eq:BBP}
\begin{array}{rcl}
&\min & x^{\top}Q_0y + d_1^{\top} x + d_2^{\top} y\label{P}\\
&\textup{s.t.}& x^{\top}Q_ky + a_k^{\top}x + b_k^{\top}y + c_k = 0, \ k\in\{1, \dots, m\}\\
&& l\leq (x,y) \leq u \\
&&(x,y)\in {\rr}^{n_1+n_2},
\end{array}
\end{eqnarray}
where $n_1,n_2 \in \mathbb{Z}_+, \ Q_0, Q_k\in\rr^{n_1\times n_2}, \ d_1, a_k\in \rr^{n_1}, \ d_2, b_k\in\rr^{n_2}, \ c_k\in\rr$, ${\forall} k\in\{1, \dots, m\}$. The vectors $l,u\in\rr^{n_1+n_2}$ define the box constraints on the decision variables and, without loss of generality, we assume that $l_i=0, \ u_i = 1, \ {\forall} i\in \{1, \dots, n_1+n_2\}$. 
BBP~(\ref{eq:BBP}) may include bipartite bilinear inequality constraints, which can be converted into equality constraints by adding slack variables, and these slack variables will also be bounded since the original variables are bounded.

We note that BBP is a special case of the more general biconvex optimization problem~\cite{gorski2007biconvex}. BBP has many applications such as waste water management~\cite{faria2011novel,castro2015tightening,galan1998optimal}, pooling problem~\cite{GupteADC17,haverly1978studies}, and supply chain~\cite{nahapetyan2008bilinear}. 

\section{Our results}
\subsection{Second order cone representable relaxation of BBP}\label{sec:SOCPintro}

A common and successful approach in integer linear programing is to generate cutting-planes implied by single constraint relaxation, see for example~\cite{crowder1983solving,marchand2001aggregation,dey2017analysis,Bodur2017}. We take a similar approach here. We begin by examining one row relaxation of BBP, that is, we study the convex hull of the set defined by a single constraint defining the feasible region of (\ref{eq:BBP}). Our first result is to show that the convex hull of this set is second order cone (SOCP) representable in the extended space, where we have introduced new variables $w_{ij}$ for $x_iy_j$. We formally present this result next.

\begin{theorem}\label{thm:conv}
%\red{(Should we mention that this result can be extended to the case in which there are side linear equations in $x$ and $y$?)} 
Let $n_1,n_2 \in \mathbb{Z}_{+}$, $V_1 \in \{1,\dots, n_1\}$, $V_2 \in \{1, \dots, n_2\}$, and $E \subseteq V_1 \times V_2$. Consider the one-constraint BBP set $$S: = \left\{ (x,y,w)\in[0,1]^{n_1 + n_2 + |E|} \, \left| \, \begin{array}{l}\sum_{(i,j)\in E}q_{ij}w_{ij} + \sum_{i\in V_1}a_i x_i + \sum_{j\in V_2}b_j y_j + c = 0, \\ w_{ij}=x_iy_j, \ {\forall} (i,j)\in E \end{array}\right\}\right. .$$ Then:
\begin{enumerate}
\item[(i)] Let $(\bar{x},\bar{y}, \bar{w})$ be an extreme point of $S$. Then, there exists $U \subseteq V_1 \cup V_2$, of the form
\begin{enumerate}
\item $U = \{i_0,j_0\}$ where $(i_0,j_0)\in E$, or  
\item $U = \{i_0\}$ where $i_0 \in V_1$ is an isolated node, or 
\item $U = \{j_0\}$ where $j_0 \in V_2$ is an isolated node,
\end{enumerate}
such that $\bar{x}_{i}\in\{0,1\}, \ \forall i\in V_1 \setminus U$, and $\bar{y}_{j}\in\{0,1\}, \ \forall j \in V_2\setminus U$.
\item[(ii)] $\textup{conv}(S)$ is SOCP-representable.
\end{enumerate}
\end{theorem}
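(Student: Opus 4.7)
My plan is to prove (i) by a contrapositive perturbation argument and then use the resulting extreme-point characterization to establish SOCP-representability in (ii). For (i), let $F = \{v \in V_1 \cup V_2 : \bar{z}_v \in (0,1)\}$ denote the set of fractional vertex variables at a candidate extreme point $(\bar x, \bar y, \bar w)$, writing $\bar z_v = \bar x_v$ if $v \in V_1$ and $\bar z_v = \bar y_v$ if $v \in V_2$. The conclusion of (i) is equivalent to $F$ being contained in some allowed $U$; failure of this forces $|F|\ge 2$ and guarantees a pair of distinct vertices $v_1, v_2 \in F$ that are \emph{not} joined by any edge of $E$ (when $|F|\ge 3$ this follows from pigeonhole applied to the two sides of the bipartition, since otherwise $V_1$ and $V_2$ would each contain at most one fractional vertex).

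Pick such $v_1, v_2$ and perturb only $\bar z_{v_1}, \bar z_{v_2}$ by $\epsilon t_1$ and $\epsilon t_2$, updating each affected $\bar w_{ij}$ so that the identity $w_{ij} = x_i y_j$ is preserved. The crucial observation is that, because $v_1$ and $v_2$ share no edge of $E$, each $\bar w_{ij}$ is touched by at most one of the two perturbations, making every $\Delta w_{ij}$ \emph{exactly} linear in $(t_1,t_2)$. The single linear equation defining $S$ therefore imposes one homogeneous condition $\lambda_1 t_1 + \lambda_2 t_2 = 0$, which admits a nonzero solution $(t_1^\star, t_2^\star)$. For sufficiently small $\epsilon>0$, the two points $(\bar x, \bar y, \bar w) \pm \epsilon(t_1^\star, t_2^\star, \Delta w^\star)$ both lie in $S$: the box constraints hold because $\bar z_{v_1}, \bar z_{v_2}$ are strictly interior, and $x_i y_j \in [0,1]$ whenever $x_i, y_j \in [0,1]$. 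Because the perturbation is exactly linear in $\epsilon$, the midpoint of these two distinct feasible points equals $(\bar x, \bar y, \bar w)$, contradicting extremality.

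For (ii), I plan to express $\text{conv}(S)$ as the convex hull of a finite union of SOCP-representable pieces, indexed by the polynomially many allowed choices of $U$. When $U$ is an isolated vertex, the corresponding slice is a union of line segments and hence polyhedral. When $U = (i_0,j_0)\in E$ is an edge, eliminating $w_{i_0 j_0}$ via the linear equation shows that, once the binary values of the remaining vertex variables are fixed, the slice reduces to a hyperbola arc cut out inside $[0,1]^2$ by an equation of the form $q_{i_0 j_0}\,x_{i_0} y_{j_0} + (\text{linear in } x_{i_0}, y_{j_0}) = 0$, whose convex hull is rotated-second-order-cone representable. Assembling these pieces via Balas-style disjunctive programming then yields an SOCP representation of $\text{conv}(S)$ in principle.

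The main obstacle, and the step I expect to be hardest, is to convert this in-principle argument into a \emph{compact} SOCP formulation, since a naive disjunction ranges over the exponentially many binary assignments to the non-$U$ vertex variables. I expect the resolution to consist of showing that the McCormick linearization of each bilinear identity $w_{ij}=x_iy_j$, augmented by a single explicit SOCP inequality per edge that captures the hyperbolic geometry observed in the edge case, already describes $\text{conv}(S)$ exactly, thereby collapsing the exponential disjunction into a polynomial-size extended SOCP formulation.
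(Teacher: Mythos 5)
Your proof of part (i) is correct and is essentially the paper's argument, written out more carefully: the paper reduces ``without loss of generality'' to two fractional variables on the same side of the bipartition and observes that the point then lies in the relative interior of a segment of $S$, whereas you make explicit the pigeonhole step showing that failure of the conclusion always produces two fractional vertices not joined by an edge, and then run the same linear perturbation. For part (ii), the first half of your plan---write $\textup{conv}(S)$ as the convex hull of a finite union of SOCP-representable pieces indexed by a choice of $U$ together with a $0/1$ fixing of the remaining vertex variables, and invoke disjunctive programming---is exactly the paper's proof, and at that point \emph{the proof of (ii) is already complete}. The theorem asserts only SOCP-representability, not a polynomial-size representation; the paper explicitly accepts that the number of pieces is $\mathcal{O}(n_1 n_2 2^{n_1+n_2})$ (Remark~\ref{rem:size}) and elsewhere concedes that the extended formulation is exponential. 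So the step you identify as the ``main obstacle'' is not part of the statement, and the resolution you sketch---that McCormick plus one SOCP inequality per edge describes $\textup{conv}(S)$ exactly---is an unsupported extra claim that you should not expect to hold: by Theorem~\ref{thm:str}, $\textup{conv}(S)$ is contained in the Boolean quadric polytope restricted to the defining hyperplane, whose facial structure (triangle inequalities and beyond) is far richer than McCormick plus a single convex inequality per edge.

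Two technical points in the edge case $U=\{i_0,j_0\}$ that your sketch glosses over and where the real work of (ii) lives. First, eliminating $w_{i_0 j_0}$ via the linear equation requires $q_{i_0 j_0}\neq 0$; the theorem does not force the $q_{ij}$ to be nonzero on $E$, and when $q_{i_0 j_0}=0$ the slice is not a hyperbola arc but, after substituting the resulting affine relation between $x_{i_0}$ and $y_{j_0}$ into $w_{i_0j_0}=x_{i_0}y_{j_0}$, a parabola in the $(y_{j_0},w_{i_0 j_0})$-plane, which needs its own SOCP argument (Proposition~\ref{prop: parabola is SOCP}). Second, the convex hull of a hyperbola branch intersected with $[0,1]^2$ is not obtained from a single rotated-cone constraint: one needs $\textup{conv}\{f=0\}=\textup{conv}\{f\le 0\}\cap\textup{conv}\{f\ge 0\}$ together with the fact that the convex hull of a reverse-convex set over a box is a polytope (Lemmas~\ref{lemma: convex of convex eq} and~\ref{lemma: reverse convex set}), yielding an intersection of an SOC-representable set with a polytope. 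Both issues are fixable, but they must be addressed for the argument to close.
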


A proof of Theorem~\ref{thm:conv} is presented in Section \ref{sec: proof of thm 1}.
\begin{remark}\label{rem:size}
In Theorem~\ref{thm:conv}, part (ii) follows from part (i). For any given choice of $U$, we first fix all the variables to $0$ or $1$ except for those in $U$. It is then shown that the convex hull of the resulting set is SOCP-representable and we obtain (ii) by convexifying the union of a finite set of SOCP representable sets. 

It is easy to see that the number of distinct $U$ sets is $\mathcal{O}(n_1n_2)$, and the number of possible fixings is $\mathcal{O}(2^{n_1 + n_2})$. Thus, the number of resulting SOCP representable objects is $\mathcal{O}(n_1n_22^{n_1 + n_2})$.
%There are at most $n_1n_2$ edges, and for any given edge there are at most $2^{n_1+n_2-2}$ many ways to fix the non-incident variables to that edge to either $0$ or $1$. Therefore, the number of disjunctions defining the convex hull can be up as many as $n_1n_22^{n_1+n_2-2}$.
\end{remark}

We note that the literature in global optimization theory has many results on convexifying functions, see for example~\cite{al1983jointly,Rikun1997,meyer2005convex,tawarmalani2002convexification,tuy2016convex}. However, as is well-known, replacing a constraint $f(x) = b$ by $\{x\,|\, \hat{f}(x) \geq b, \ \breve{f}(x) \leq b\}$ where $\hat{f}$ and $\breve{f}$ are the concave and convex envelop of $f$, does not necessarily yield the convex hull of the set $\{x \,|\, f(x) = b\}$. There are relatively lesser number of results on convexification of sets~\cite{tawarmalani2013explicit,nguyen2013deriving,nguyen2011convexification,tawarmalani2010strong}. 
Theorem~\ref{thm:conv} generalizes results presented in~\cite{Tawarmalani2010,akshayguptethesis,kocuk2017matrix} and is related to results presented in~\cite{davarnia2017simultaneous}.

\paragraph{The SOCP relaxation for the feasible region of the general BBP (\ref{eq:BBP})} that we propose, henceforth referred as $S^{SOCP}$, is the intersection of the convex hull of each of the constraints of (\ref{eq:BBP}). Formally:
$$S^{SOCP} = \bigcap_{k = 1}^m \textup{conv}(S_k),$$ where $S_k = \{(x,y,w)\in [0,1]^{n_1 \times n_2 \times |E|}\, | \, x^{\top}Q_ky + a_k^{\top}x + b_k^{\top}y + c_k = 0, w_{ij} = x_iy_j \ \forall (i,j) \in E \}$ and $E$ is the edge set of the graph corresponding to the BBP instance (and not just of one row). As an aside, note that $S^{SOCP}$ can be further strengthened by adding the convex hull of single row BBP sets arrived by taking linear combinations of rows. 

Next we discuss the strength of $S^{SOCP}$ vis-\'a-vis the strength of other standard relaxations. Consider the following two standard relaxations of the feasible region of BBP (\ref{eq:BBP}): Let $S^{SDP}$ be the standard semi-definite programming (SDP) relaxation and let
\begin{eqnarray}
S^{QBP} &:=& \{(x, y, w)\in[0,1]^{n_1+n_2+|E|} \,|\, \sum_{(ij) \in E} (Q_k)_{ij}w_{ij} + a_k^{\top}x + b_k^{\top}y + c_k = 0  \ k \in \{1, \dots, m\}\} \nonumber\\
&& \bigcap \textup{ conv}\left(\{(x, y, w) \in[0,1]^{n_1+n_2+ |E|}\,|\, w_{ij} = x_i y_j \ \forall (i,j) \in E\}\right).\label{eq:QBP0}
\end{eqnarray}
Note that $S^{QBP}$ is a polyhedral set, since the second set in the right-hand-side of (\ref{eq:QBP0}) is equal to the Boolean Quadratic Polytope~\cite{Burer2009}. Two well-known classes of valid inequalities for this set are the McCormick's inequalities~\cite{al1983jointly} and the triangle inequalities {\cite{padberg1989boolean}}.
\begin{theorem}\label{thm:str}
For any BBP, we have that 
$$\textup{proj}_{x, y, w} \left(S^{SDP}\right) \bigcap S^{QBP} \supseteq S^{SOCP}.$$
\end{theorem}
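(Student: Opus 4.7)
The plan is to prove the two inclusions $S^{SOCP}\subseteq S^{QBP}$ and $S^{SOCP}\subseteq \textup{proj}_{x,y,w}(S^{SDP})$ separately.

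For the first inclusion, I would argue directly from the definition of $S^{SOCP}=\bigcap_{k=1}^m \textup{conv}(S_k)$. For each $k$, every point of $S_k$ satisfies the $k$-th linear equation $\sum_{(i,j)\in E}(Q_k)_{ij}w_{ij}+a_k^\top x+b_k^\top y+c_k=0$ (a hyperplane, preserved under convex hull) and lies in the boolean-type set $B:=\{(x,y,w)\in[0,1]^{n_1+n_2+|E|}\,:\,w_{ij}=x_iy_j\ \forall(i,j)\in E\}$, so $\textup{conv}(S_k)\subseteq\textup{conv}(B)$. Intersecting over $k$ gives containment in the intersection of all the linear equations with $\textup{conv}(B)$, which is exactly $S^{QBP}$.

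For the second inclusion, the key idea is a convex-combination construction of a PSD completion. Take any $(\bar x,\bar y,\bar w)\in S^{SOCP}$. By the first inclusion it lies in $\textup{conv}(B)$, so it can be written as $(\bar x,\bar y,\bar w)=\sum_t \lambda_t (x^t,y^t,w^t)$ with $\lambda_t\geq 0$, $\sum_t\lambda_t=1$, $(x^t,y^t)\in[0,1]^{n_1+n_2}$, and $w^t_{ij}=x^t_i y^t_j$ for all $(i,j)\in E$. Define
$$M:=\sum_t \lambda_t \begin{pmatrix}1\\ x^t\\ y^t\end{pmatrix}\begin{pmatrix}1 & (x^t)^\top & (y^t)^\top\end{pmatrix},$$
which is PSD as a nonnegative combination of rank-one PSD matrices. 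Its top row/column reads $(1,\bar x^\top,\bar y^\top)$, its $(x_i,y_j)$ block for $(i,j)\in E$ equals $\bar w_{ij}$, and the remaining entries supply the auxiliary $X,Y,W$ variables needed by the SDP lifting. Thus $M$ lies in the PSD feasible region and projects onto $(\bar x,\bar y,\bar w)$.

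It then remains to verify that $M$ satisfies the other components of the standard SDP relaxation. The linearized constraints $\sum_{(i,j)\in E}(Q_k)_{ij}\bar w_{ij}+a_k^\top\bar x+b_k^\top \bar y+c_k=0$ already hold for $k=1,\dots,m$ by the first inclusion. Any box-type relaxation inequalities obtained from $x_i,y_j\in[0,1]$ (e.g.\ diagonal bounds $M_{ii}\leq \bar x_i$ from $\sum_t\lambda_t(x^t_i)^2\leq \sum_t\lambda_t x^t_i$, or McCormick envelopes on off-diagonal $x_ix_{i'}$ and $y_jy_{j'}$ entries) are preserved by the convex-combination construction since each summand $(x^t,y^t)\in[0,1]^{n_1+n_2}$ satisfies them exactly. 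I expect the only delicate point is a careful statement of precisely which inequalities are taken to define $S^{SDP}$; once that is fixed, each is a convex combination of valid constraints on the vertex terms $(1,x^t,y^t)(1,x^t,y^t)^\top$, so there is no real obstacle beyond a clean enumeration.
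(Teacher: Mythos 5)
Your proof is correct, and while the $S^{QBP}\supseteq S^{SOCP}$ half coincides with the paper's argument (each $\textup{conv}(S_k)$ sits inside the $k$-th linearized hyperplane intersected with the Boolean quadric polytope, then intersect over $k$), your treatment of the SDP half takes a genuinely different route. The paper works row by row: it defines a lifted set $T^k$ for each constraint, shows $\textup{proj}_{x,y,w}(T^k)\supseteq\textup{conv}(S^k)$, and then needs a separate claim that projection commutes with the intersection over $k$ --- proved by observing that the set of feasible completions $H$ of a given $(\bar x,\bar y,\bar w)$ is the same for every $k$, so a single $\bar H$ works simultaneously. You short-circuit all of this by using the already-established membership $(\bar x,\bar y,\bar w)\in\textup{conv}(B)$ to write the point as a convex combination of points with $w^t_{ij}=x^t_iy^t_j$, and then exhibiting the explicit PSD completion $M=\sum_t\lambda_t(1,x^t,y^t)(1,x^t,y^t)^\top$; the border of $M$ recovers $(\bar x,\bar y)$, the $E$-entries of its inner block recover $\bar w$, and the linearized row constraints hold because they are affine and valid on each $\textup{conv}(S_k)$. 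This is more constructive and avoids the projection--intersection commutation lemma entirely, at the cost of leaning on the first inclusion as a prerequisite (the paper's two halves are independent of each other). Your closing hedge about ``which inequalities define $S^{SDP}$'' is harmless here: the paper's standard SDP relaxation consists only of the linearized rows, the condition $\textup{proj}_E(H)=w$, and the PSD constraint, all of which your $M$ satisfies; and as you note, any additional RLT/box inequalities valid on each rank-one term would survive the convex combination anyway.
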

A proof of Theorem~\ref{thm:str} is presented in Section \ref{sec:thm2}.
\begin{remark}\label{remark: sparse graph}
It is possible to show that the convex hull of one row BBP is SOCP representable, even without introducing the $w$ variables. Thus, it is possible to construct, similar to $S^{SOCP}$, a SOCP-representable relaxation of BBP, without introducing $w$ variables. However, this SOCP relaxation would be weaker. In particular, we are unable to prove the corresponding version of Theorem~\ref{thm:str} for this SOCP relaxation. The strength of $S^{SOCP}$ relaxation is due to the fact that the extended space $w$ variables `interact' from different constraints.
\end{remark}
We note that other SOCP relaxations for QCQPs have been proposed~\cite{kim2001second,Burer2014}. However, these are all weaker than the standard SDP relaxation. 

We also note that it is polynomial time to optimize on $S^{SDP}$, although the tractability of solving SDPs in practice is still limited. On the other hand, solvers for SOCPs are significantly better in practice. It is NP-hard to optimize on $S^{QBP}$, although as discussed in Remark~\ref{rem:size}, the size of the extended formulation to obtain $S^{SOCP}$ is exponential in size.

\subsection{A new branching rule}
For details about general branch-and-bound scheme for global optimization see, for example, \cite{Ryoo1996}. Inspired by the convex relaxation described in Section \ref{sec:SOCPintro}, we propose a new rule for partitioning the domain of a given variable in order to produce two branches. Details of this new proposed branching rule together with node selection and variable selection rules that we used in our computational experiments are presented in Section~\ref{sec:bb}. 

Here, we sketch the main ideas behind our new proposed branching rule.
%, which is based on the assumption that we are using the SOCP based relaxation described above at each node. 
Suppose we have decided to branch on the variable $x_1$. As explained in Remark~\ref{rem:size}, the convex hull of the one constraint set is obtained by taking the convex hull of union of sets obtained by fixing all but two (or one) variables. If we are branching on $x_1$, we examine all such two-variable sets involving $x_1$ obtained from each of the constraints. For each of these sets, there is an ideal point to divide the range of $x_1$ so that the sum of the volume of the two convex hulls of the two-dimensional sets corresponding to the two resulting branches is minimized. (See recent papers on importance of volume minimization in branch-and-bound algorithm~\cite{speakman2017branching}). We present a heuristic to find an  ``ideal range". We collect all such ideal ranges corresponding to all the two-dimensional sets involving $x_1$. Then we present a heuristic to select one points (based on corresponding volume reduction) to finally partition the domain of $x_1$.  We also use similar arguments to propose a new variable selection rule. 

\subsection{A new application of BBP and computational experiments}

A new application of BBP, which motivated our work presented here, is called as the \textit{finite element model updating problem}, which is a fundamental methodological problem in structural engineering. See Section~\ref{sec: finite element model} for a description of the problem. All the new methods we develop here are tested on instances of this problem.

Due to the large size of $S^{SOCP}$, in practice, we consider a lighter version of this relaxation. In particular, we write the extended formulation of each row of BBP corresponding only to the variables in that row (see details in Section \ref{sec: lighter version}). As our instances are row sparse, the resulting SOCP relaxation can be solved in reasonable time. Unfortunately, there are no theoretical guarantees for the bounds of this light version of the relaxation. After some preliminary experimentation, we observed that a polyhedral outer approximation of the SOCP relaxation produces similar bounds but solves much faster. Therefore, we used this linear programming (LP) relaxation in our experiments. Details of this outer approximation is presented in Section~\ref{sec:Polyhedralrelaxation}.

Our computational experiments are aimed at making three comparisons. First, we examined the quality of the dual bound produced at root node via our new method (polyhedral outer approximation of SOCP relaxation) against SDP, McCormick, and SDP together with McCormick inequalities. The bounds produced are better for the new method. Second, we test the performance of the new branching rule against traditional branching rules. Our experiments show that the new branching rule significantly out performs the other branching rules. Finally, we compare the performance of our naive branch-and-bound implementation against BARON. In all instances, we close significantly more gap in equal amount of time. All these results are discussed in detail in Section~\ref{sec:compres}.

\section{Second order cone representable relaxation and its strength}\label{sec:thm1}
\subsection{Proof of Theorem~\ref{thm:conv}} \label{sec: proof of thm 1}
Consider the bipartite graph $G=(V_1,V_2,E)$ defined by the set of vertices $V_1 = \{1, \dots, n_1\}$ and $V_2= \{1, \dots, n_2\}$ which is associated to the equation
\begin{align}
\sum_{(i,j)\in E}q_{ij}x_i y_j  + \sum_{i\in V_1}a_i x_i + \sum_{j\in V_2}b_j y_j+ c = 0.\tag{EQ} \label{eq_single}
\end{align}
In this section, we prove that the convex hull of the set
\begin{align}
S = \{(x,y, w)\in[0,1]^{{n_1}+{n_2} + |E|} \,|\, (\ref{eq_single}), \ w_{ij} = x_iy_j \ \forall (i,j) \in E \}. \label{set: S EQ}
\end{align}
is SOCP representable. In addition, the proof provides {an} implementable procedure to obtain $\convex(S)$. The key idea underlying this result is the fact that, at each extreme point of $S$, at most two variables are not fixed to 0 or 1 and, once all variables but two (or one) are fixed, the convex hull of the resulting object is SOCP representable in $\rr^2$ (or $\rr$). Hence, $\convex(S)$ can be written as the convex hull of an union of SOCP representable sets.

\subsubsection{Preliminary results}
First we present a few preliminary results that will be used to prove that $\convex(S)$ is SOCP representable.

\begin{lemma}\label{lemma: convex of convex eq}\cite{Tawarmalani2013}
Let $f:[0,1]^n\to\mathbb{R}$ be a continuous function and $B \subseteq [0, 1]^n$ be a convex set. Then
$$
\convex({\{x\in B\,|\, f(x)=0\}})= \convex \left({\{x\in B\,|\, f(x)\leq 0\}}\right) \bigcap \convex({\{x\in B \,|\, f(x)\geq 0\}}).
$$
\end{lemma}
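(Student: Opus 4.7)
My plan is to prove the two inclusions separately. The inclusion $\convex(\{x \in B : f(x) = 0\}) \subseteq \convex(\{x \in B : f(x) \leq 0\}) \cap \convex(\{x \in B : f(x) \geq 0\})$ is immediate from monotonicity of $\convex$, since $\{x \in B : f(x)=0\}$ sits inside each of the two sets on the right.

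For the reverse inclusion I would argue by contradiction via a separating hyperplane. Let $L = \{x \in B : f(x) \leq 0\}$, $G = \{x \in B : f(x) \geq 0\}$, and $E = L \cap G = \{x \in B : f(x) = 0\}$. Assume for contradiction that there exists $y \in \convex(L) \cap \convex(G)$ with $y \notin \convex(E)$. Under the (implicit) assumption that $B$ is closed (the relevant case for the paper, $B = [0,1]^n$, satisfies this), $E$ is a compact subset of $[0,1]^n$, and hence $\convex(E)$ is compact by Carath\'eodory. Strict separation then yields a linear functional $\ell$ and a scalar $\alpha$ with $\ell(y) > \alpha$ and $\ell(z) \leq \alpha$ for every $z \in \convex(E)$.

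The core geometric observation is the following. Consider the slice $U = \{z \in B : \ell(z) > \alpha\}$, which is convex (hence connected) as the intersection of $B$ with an open half-space, and which is non-empty because $y \in U$. For any $z \in U$ we must have $f(z) \neq 0$, since otherwise $z \in E \subseteq \convex(E)$ would force $\ell(z) \leq \alpha$. Because $f$ is continuous on the connected set $U$, the image $f(U)$ is a connected subset of $\mathbb{R} \setminus \{0\}$, so $f$ has constant sign on $U$. If $f > 0$ on $U$, then $L \cap U = \emptyset$, so $L \subseteq \{\ell \leq \alpha\}$, and taking convex hulls gives $\convex(L) \subseteq \{\ell \leq \alpha\}$, contradicting $y \in \convex(L)$ with $\ell(y) > \alpha$. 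The case $f < 0$ on $U$ is symmetric and produces the contradiction via $\convex(G)$.

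The main obstacle I anticipate is the closedness needed to invoke strict separation: one requires $\convex(E)$ to be closed, which rests on $E$ being compact and hence on $B$ being closed. For a fully general convex $B$, one would first reduce to the closed case by passing to the closure of $B$ and exploiting continuity of $f$; once that is done, the connectedness-plus-constant-sign argument on the open slice $U$ goes through verbatim.
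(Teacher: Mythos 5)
Your proof is correct for the case the paper actually needs, and it is worth noting up front that the paper offers no proof of this lemma at all: it is imported by citation from the reference [Tawarmalani2013], so there is no in-paper argument to match against. Your route --- strictly separate a putative point $y\in\convex(L)\cap\convex(G)\setminus\convex(E)$ from the compact set $\convex(E)$, observe that $f$ never vanishes on the convex (hence connected) slice $U=B\cap\{\ell>\alpha\}$, conclude that $f$ has constant sign on $U$, and deduce that one of $\convex(L),\convex(G)$ lies entirely in the half-space $\{\ell\le\alpha\}$ --- is clean and complete when $B$ is closed, which holds in every invocation in the paper ($B=[0,1]^2$ in Proposition~\ref{prop: parabola is SOCP}, and $[0,1]^2$ intersected with closed half-planes in Proposition~\ref{prop: hyp is SOCP}). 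The proof usually given in the literature is different: it takes representations $y=\sum_i\lambda_iu_i$ with $u_i\in L$ and $y=\sum_j\mu_jv_j$ with $v_j\in G$ and, when some $f(u_i)<0$ and some $f(v_j)>0$, applies the intermediate value theorem on the segment $[u_i,v_j]\subseteq B$ to manufacture points of $E$, followed by an exchange argument; that route needs no separation theorem and hence no closedness of $B$, at the cost of a more delicate combinatorial bookkeeping. Two small points to tighten in your write-up: dispose of $E=\emptyset$ separately (take $\ell\equiv 0$, $\alpha=-1$, so $U=B$; constant sign of $f$ on $B$ forces $L=\emptyset$ or $G=\emptyset$), since the separating hyperplane theorem is normally stated for nonempty sets; and state explicitly that $y\in B$ (because $\convex(L)\subseteq B$ by convexity of $B$), which is what places $y$ in $U$.

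The one substantive caveat concerns your closing remark on general convex $B$. The proposed reduction ``pass to the closure of $B$'' does not work as stated: replacing $B$ by $\overline{B}$ enlarges all three sets $E$, $L$, $G$ simultaneously, so applying the closed case to $\overline{B}$ only gives $\convex(L)\cap\convex(G)\subseteq\convex(\{x\in\overline{B}\,|\,f(x)=0\})$, which may be strictly larger than $\convex(E)$; an additional argument would be needed to pull the point back into $\convex(E)$. Since the lemma is only ever applied with closed $B$ in this paper, this is harmless here, but if you want the statement in the generality in which it is quoted you should either add a closedness hypothesis or switch to the intermediate-value/exchange argument for the hard inclusion.
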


\begin{lemma}\label{lemma: reverse convex set}\cite{Hillestad1980}
Let $f:[0,1]^n\to\mathbb{R}$ be a convex function. Then 
$$
G:= \convex({\{x\in [0,1]^n\,|\,  f(x)\geq 0\}}),
$$
is a polytope. Indeed, $G$ can be obtained as the convex hull of finite number of points obtained as follows: fix all but one variable to $0$ or $1$ and solve for $f(x) = 0$. 
\end{lemma}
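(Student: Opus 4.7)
The plan is to characterize the extreme points of $G$ and to verify that they all belong to a finite, explicitly describable set; once this is done, $G=\convex(\text{finite set})$ is a polytope by definition. Since $f$ is continuous, the set $\{x\in[0,1]^n : f(x)\geq 0\}$ is closed, so every extreme point $x^*$ of $G$ lies in it, i.e.\ $x^*\in[0,1]^n$ and $f(x^*)\geq 0$.

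The core step is to show that $x^*$ must lie on a $1$-dimensional face (an edge) of $[0,1]^n$. I would argue by contradiction: let $F$ be the minimal face of $[0,1]^n$ containing $x^*$ and suppose $\dim(F)=d\geq 2$, so $x^*$ lies in the relative interior of $F$. The set $C:=\{x\in F : f(x)<0\}$ is open in $F$ and convex by convexity of $f$. If $f(x^*)>0$, then a small relative neighborhood of $x^*$ in $F$ lies in $\{f>0\}$, yielding a nontrivial segment through $x^*$ inside $\{f\geq 0\}\cap[0,1]^n$, contradicting extremality. Otherwise $f(x^*)=0$; if $C=\emptyset$ the same neighborhood argument applies, and if $C\neq\emptyset$ then $x^*\in\partial C$. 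Applying the supporting hyperplane theorem in the affine hull of $F$ produces a supporting hyperplane $H^0$ to $C$ at $x^*$, of dimension $d-1\geq 1$. Openness of $C$ forces $C\cap H^0=\emptyset$, so for any nonzero direction $v$ in $H^0$ the line $x^*+\mathbb{R}v\subseteq H^0$ avoids $C$ entirely. Its intersection with $F$ is a segment contained in $F\cap\{f\geq 0\}$ with $x^*$ in its relative interior, again contradicting extremality.

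Having forced every extreme point onto an edge of $[0,1]^n$, I restrict $f$ to such an edge: all coordinates but one are fixed in $\{0,1\}$, and the remaining coordinate ranges over $[0,1]$. The restriction $g$ is a convex function of one variable on $[0,1]$, so $\{t\in[0,1] : g(t)\geq 0\}$ is a union of at most two closed intervals whose endpoints are either in $\{0,1\}$ or are roots of $g$ in $(0,1)$. The resulting candidate extreme points of $G$ are therefore either vertices of $[0,1]^n$ or points obtained exactly by the stated recipe---fixing all but one variable to $0$ or $1$ and solving $f(x)=0$. Since there are only finitely many such candidates, $G$ is the convex hull of a finite set and hence a polytope.

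The main technical obstacle is the $f(x^*)=0$ subcase of the contradiction step, where convexity of $f$ (not merely continuity) is essential: convexity turns $C$ into a convex set, making supporting-hyperplane arguments available, while the openness of $C$ is what allows one to slide along $H^0$ and exhibit a segment through $x^*$ that stays in $\{f\geq 0\}$.
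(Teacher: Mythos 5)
Your argument is correct and is essentially the classical reverse-convex-programming proof that the paper itself does not reproduce but delegates to the citation of Hillestad: extreme points of $G$ are forced onto edges of the cube by a supporting-hyperplane argument applied to the relatively open convex set $\{f<0\}$ inside the minimal face containing the point, and on each edge the one-variable convex restriction leaves at most four candidate interval endpoints. The only step worth one more line is the claim that $C\neq\emptyset$ and $f(x^*)=0$ imply $x^*\in\partial C$; this does hold, since for $y\in C$ convexity gives $f(\lambda x^*+(1-\lambda)y)\leq(1-\lambda)f(y)<0$ for all $\lambda\in(0,1)$, so $x^*\in\overline{C}\setminus C$.
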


\begin{lemma}\label{lemma: convex of union of convex sets}\cite{ben2001lectures}
Let $T\subset \rr^n$ be a compact set and $\{T_k\}_{k\in K}$ be a partition of the set of all extreme points of $T$. Then,
\begin{align}
\convex(T) = \convex\left(\bigcup_{k\in K} T_k \right) = \convex \left(\bigcup_{k\in K}\convex(T_k) \right).
\end{align}
In addition, if $\convex(T_k)$ is a SOCP representable set for every $k\in K$, then $\convex(T)$ is also a SOCP representable set.
\end{lemma}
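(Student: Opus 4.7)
The plan is to separate the lemma into (a) the chain of set equalities and (b) the SOCP representability conclusion.

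For (a), the starting point is that $T\subset\rr^n$ is compact, so by Carath\'eodory $\convex(T)$ is a compact convex set, and by Minkowski's theorem for finite-dimensional compact convex sets we have $\convex(T)=\convex(\operatorname{ext}(\convex(T)))$. I would then verify the standard fact that $\operatorname{ext}(\convex(T))\subseteq T$: any $p\in\convex(T)\setminus T$ can be written, via Carath\'eodory, as a convex combination of elements of $T$ with at least two strictly positive coefficients, and hence is not extreme in $\convex(T)$. Interpreting the phrase ``extreme points of $T$'' in the lemma as $\operatorname{ext}(\convex(T))$ (these two notions coincide for compact sets), the partition hypothesis $\{T_k\}_{k\in K}$ then delivers $\convex(T)=\convex(\bigcup_{k\in K} T_k)$. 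The second equality $\convex(\bigcup_k T_k)=\convex(\bigcup_k \convex(T_k))$ is a short set-inclusion chase: $T_k\subseteq \convex(T_k)$ gives $\subseteq$ by monotonicity of $\convex(\cdot)$; conversely each $\convex(T_k)\subseteq \convex(T)=\convex(\bigcup_k T_k)$, and $\convex(\bigcup_k T_k)$ being convex and containing $\bigcup_k \convex(T_k)$ gives the reverse direction.

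For (b), taking $|K|<\infty$ (which is the relevant case in the application, cf.\ Remark~\ref{rem:size}), I would invoke the standard disjunctive conic programming construction. If each $C_k:=\convex(T_k)$ is SOCP-representable, say $C_k=\{z : \exists\,u^k,\ A_k z + B_k u^k + c_k\in \mathcal{L}_k\}$ with $\mathcal{L}_k$ a product of Lorentz cones, then
$$
\convex\Bigl(\bigcup_{k\in K} C_k\Bigr)=\Bigl\{z=\textstyle\sum_{k\in K} z^k \,:\, \sum_{k\in K}\lambda_k=1,\ \lambda_k\ge 0,\ A_k z^k+B_k u^k+\lambda_k c_k\in\mathcal{L}_k\ \forall k\in K\Bigr\},
$$
which homogenizes each $C_k$ through the multipliers $\lambda_k$. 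This set is SOCP-representable as a linear projection of an SOCP-representable lifted set. Combined with the equalities in (a), this gives $\convex(T)$ SOCP-representable.

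The main obstacle is not a single deep step but the two easily overlooked points that rely essentially on compactness of $T$: first, identifying the ``extreme points of $T$'' appearing in the hypothesis with $\operatorname{ext}(\convex(T))$ so that Minkowski's theorem can be applied; and second, ensuring that the disjunctive SOCP lifting in (b) yields the exact convex hull rather than only its closure, which holds because each $C_k\subseteq\convex(T)$ is bounded. Both are standard, but writing them carefully is what turns the plan into a proof.
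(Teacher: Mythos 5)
The paper does not actually prove this lemma: it is imported by citation from Ben-Tal and Nemirovski, so there is no in-paper argument to compare against. Your proposal is correct and is essentially the standard proof from that source: Carath\'eodory plus Minkowski's theorem for the chain of equalities, and the perspective/disjunctive conic lifting for representability of the convex hull of a finite union. Two small remarks. First, the definitional point you raise about ``extreme points of $T$'' versus $\operatorname{ext}(\convex(T))$ is immaterial to the conclusion: all that is needed is that the set being partitioned contains $\operatorname{ext}(\convex(T))$ and is contained in $T$, which holds under any reasonable reading, and your Carath\'eodory argument for $\operatorname{ext}(\convex(T))\subseteq T$ is the right one. Second, the closure issue you flag in part (b) can be dispatched without appealing to exactness of the lifted formulation, but only if each $\convex(T_k)$ is closed; since $T_k$ is merely a subset of the extreme points and need not be compact, the cleaner route is the one you chose, namely invoking boundedness of the $C_k$ (inherited from $C_k\subseteq\convex(T)$ compact) so that the disjunctive representation returns the exact convex hull. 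You are also right that finiteness of $K$ is needed for the SOCP conclusion and is implicit in the lemma; in the paper's application (Remark~\ref{rem:size}) the union is indeed finite.
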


\begin{lemma}\label{lemma: convex of intersection with affine set}
Let $B=\{(x,w)\in[0,1]^{n} \times \mathbb{R}\,|\,  x\in B_0, \ w=l^{\top}x+l_0\}$, where $B_0\subseteq\rr^{n}$, and $l^{\top}x + l_0$ is an affine function of $x$. Then,
$$\convex(B) = \{(x,w)\in[0,1]^{n} \times \mathbb{R}\,|\, x\in \convex(B_0), \ w=l^{\top}x+l_0\}.$$
\end{lemma}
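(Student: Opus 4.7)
The plan is to prove the equality by showing two set inclusions. Denote the right-hand side by $C := \{(x,w) \in [0,1]^{n} \times \mathbb{R} \,|\, x \in \textup{conv}(B_0), \ w = l^{\top}x + l_0\}$.

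First I would establish $\textup{conv}(B) \subseteq C$. The containment $B \subseteq C$ is immediate since $B_0 \subseteq \textup{conv}(B_0)$ and the affine equation defining $w$ is identical in $B$ and $C$. Next, $C$ is the intersection of the convex set $[0,1]^n \times \mathbb{R}$, the convex set $\textup{conv}(B_0) \times \mathbb{R}$, and the hyperplane $\{(x,w)\,|\, w = l^{\top}x + l_0\}$, so $C$ is itself convex. Taking the convex hull on both sides of $B \subseteq C$ gives the inclusion.

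For the reverse inclusion $C \subseteq \textup{conv}(B)$, I would take an arbitrary $(\bar{x}, \bar{w}) \in C$. Since $\bar{x} \in \textup{conv}(B_0)$, Carath\'eodory's theorem produces a representation $\bar{x} = \sum_{i=1}^{n+1} \lambda_i x^i$ with $x^i \in B_0$, $\lambda_i \geq 0$, and $\sum_i \lambda_i = 1$. Define $w^i := l^{\top}x^i + l_0$ so that each $(x^i, w^i)$ satisfies the affine equation. Then, using linearity of $x \mapsto l^{\top}x + l_0$ and the relation $\bar{w} = l^{\top}\bar{x} + l_0$,
$$\sum_i \lambda_i (x^i, w^i) = \left(\bar{x}, \ l^{\top}\bar{x} + l_0\right) = (\bar{x}, \bar{w}).$$
Provided each $(x^i, w^i)$ lies in $B$, this exhibits $(\bar{x},\bar{w})$ as a convex combination of points in $B$, completing the argument.

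The one delicate point — and the main obstacle — is verifying $(x^i, w^i) \in B$, which requires $x^i \in [0,1]^n$. This forces us to read $B_0 \subseteq [0,1]^n$ into the hypothesis; without it, a convex combination of $B_0$-points can land in the box even when some individual $x^i$ does not, and $C$ could be strictly larger than $\textup{conv}(B)$. In the paper's intended applications, $B_0$ will arise from variable fixings inside $[0,1]^{n_1+n_2}$, so the containment $B_0 \subseteq [0,1]^n$ is implicit and the Carath\'eodory decomposition produces points in $B$ as required. Beyond this caveat, the proof is a direct consequence of Carath\'eodory's theorem and affinity of the $w$-equation.
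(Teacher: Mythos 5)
Your proof is correct and follows essentially the same route as the paper's: the reverse inclusion via a Carath\'eodory decomposition of $\bar{x}$ in $\convex(B_0)$, lifting each $x^i$ by the affine map to get $(x^i,w^i)\in B$, is exactly the paper's argument, and your forward inclusion (convexity of the right-hand side containing $B$) is a minor streamlining of the paper's explicit convex-combination computation. Your caveat that the statement tacitly requires $B_0 \subseteq [0,1]^n$ is well taken --- the paper's own proof makes the same unstated assumption when it asserts $(x^i,w^i)\in B$, and that assumption does hold in every application of the lemma in the paper.
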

\begin{proof} We assume $B_0$ is non-empty, otherwise, there is nothing to prove. 
Let $(x,w)\in \convex(B)$. Then there exist $(x^i,w^i)\in B$ and $\lambda_i\geq 0, \ \forall i\in \{1, \dots, n+2\}$, such that $\sum_{i=1}^{n+2}\lambda_i=1$, $x = \sum_{i=1}^{n+2}\lambda_i x^i$ and $w = \sum_{i=1}^{n+2}\lambda_i w^i$. It follows by the definition of $B$ that $x^i\in B_0, \ \forall i\in \{1, \dots, n+2\}$, and hence $x\in \convex(B_0)$. It also follows from the definition of $B$ that $w^i = l^{\top}x^i+l_0, \ \forall i\in \{1, \dots, n+2\}$, and hence
$$w = \sum_{i=1}^{n + 2} \lambda_iw^i = \sum_{i=1}^{n + 2}\lambda_i(l^{\top}x^i+l_0) = l^{\top}\left(\sum_{i=1}^{n+ 2}\lambda_ix^i\right)+l_0 = l^{\top}x + l_0.$$
Conversely, let $(x,w)$ be such that $x\in \convex(B_0)$ and $w=l^{\top}x+l_0$. Then, there exist $x^i\in B_0$ and $\lambda_i\geq 0, \ \forall i\in \{1, \dots, n+1\}$, such that $\sum_{i=1}^{n + 1}\lambda_i=1$, $x = \sum_{i=1}^{n + 1}\lambda_i x^i$. Define $w^i = l^{\top}x^i+l_0, \ \forall i\in \{1, \dots, n+1\}$. Then $(x^i,w^i)\in B, \ \forall i\in \{1, \dots, n+1\}$. In addition, 
$$
w = l^{\top}x+l_0 = l^{\top}\left(\sum_{i=1}^{n + 1}\lambda_i x^i\right) + l_0 = \sum_{i=1}^{n + 1}\lambda_i (l^{\top}x^i + l_0) = {\sum_{i=1}^{n+1}}\lambda_i w^i,
$$
which completes the proof.
\end{proof}

\subsubsection{Proof of part (i) of Theorem~\ref{thm:conv}}
We restate part (i) of Theorem~\ref{thm:conv} next for easy reference:
\begin{proposition}\label{prop: extreme points of a single eq}
Let $(\bar{x},\bar{y}, \bar{w})$ be an extreme point of the set $S$ defined in (\ref{set: S EQ}). Then, there exists $U \subseteq V_1 \cup V_2$, of the form 
\begin{enumerate}
\item $U = \{i_0,j_0\}$ where $(i_0,j_0)\in E$, or, 
\item $U = \{i_0\}$ where $i_0 \in V_1$ is an isolated node, or,
\item $U = \{j_0\}$ where $j_0 \in V_2$ is an isolated node,
\end{enumerate}
such that $\bar{x}_{i} \in\{0,1\}, \ \forall i\in V_1 \setminus U$, and $\bar{y}_{j}, \ \forall j \in V_2\setminus U$.
\end{proposition}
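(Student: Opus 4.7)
The plan is to identify the set $F \subseteq V_1 \cup V_2$ of indices at which $\bar{x}$ or $\bar{y}$ is fractional, show that $|F| \leq 2$, and show that when $|F| = 2$ the two fractional indices must be joined by an edge of $E$; the required set $U$ can then be chosen to contain $F$ in one of the three prescribed forms.

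First, I would establish the slicing bound $|F \cap V_1| \leq 1$ (the bound $|F \cap V_2| \leq 1$ follows by symmetry). Fixing $y = \bar{y}$, the equation (\ref{eq_single}) becomes affine in $x$, so the slice
\[
S(\bar{y}) := \{x \in [0,1]^{n_1} \mid g(x, \bar{y}) = 0\}, \qquad g(x,y) := \sum_{(i,j) \in E} q_{ij} x_i y_j + \sum_{i \in V_1} a_i x_i + \sum_{j \in V_2} b_j y_j + c,
\]
is a polytope cut out by $n_1$ box constraints and a single affine equation. If $\bar{x}$ were not an extreme point of $S(\bar{y})$, writing $\bar{x} = \tfrac{1}{2}(x^+ + x^-)$ with $x^\pm \in S(\bar{y}) \setminus \{\bar{x}\}$ and lifting via $w^\pm_{ij} := x^\pm_i \bar{y}_j$ would produce two distinct feasible points of $S$ whose midpoint is $(\bar{x}, \bar{y}, \bar{w})$, contradicting extremality. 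Hence $\bar{x}$ is extreme in $S(\bar{y})$, which forces at most one fractional coordinate in $\bar{x}$; that is, $|F \cap V_1| \leq 1$. The symmetric argument gives $|F \cap V_2| \leq 1$, so $|F| \leq 2$.

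Next, suppose $|F| = 2$, write $F = \{i_0, j_0\}$ with $i_0 \in V_1$ and $j_0 \in V_2$, and assume for contradiction that $(i_0, j_0) \notin E$. Setting
\[
C := a_{i_0} + \sum_{j : (i_0, j) \in E} q_{i_0 j} \bar{y}_j, \qquad D := b_{j_0} + \sum_{i : (i, j_0) \in E} q_{i j_0} \bar{x}_i,
\]
I would choose any nonzero $(\alpha, \beta) \in \rr^2$ with $\alpha C + \beta D = 0$ and define $(x^\pm, y^\pm, w^\pm)$ by perturbing $\bar{x}_{i_0}$ by $\pm\epsilon\alpha$ and $\bar{y}_{j_0}$ by $\pm\epsilon\beta$, leaving every other $x, y$ coordinate unchanged, and setting the perturbed $w$'s as the corresponding products. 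Because $(i_0, j_0) \notin E$, the potential quadratic cross-term $q_{i_0 j_0} \alpha \beta \epsilon^2$ is absent from $g$, so its residual reduces to $\pm\epsilon(\alpha C + \beta D) = 0$; moreover each $w$ variable depends linearly on the perturbation since no product of two perturbed variables appears. For $\epsilon$ small enough both perturbed points lie in $S$ and their midpoint equals $(\bar{x}, \bar{y}, \bar{w})$, contradicting extremality. Therefore $(i_0, j_0) \in E$.

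Finally, choosing $U$ is routine: when $|F| = 2$ set $U = F$, which is an edge by the previous step; when $F = \{v\}$, set $U = \{v\}$ if $v$ is isolated and $U = \{v, u\}$ for any neighbor $u$ of $v$ otherwise, noting that the conclusion only constrains indices outside $U$; when $|F| = 0$, take any edge of $E$ or any isolated vertex of $V_1 \cup V_2$. The main obstacle is the perturbation argument in the middle step: one must recognize that the hypothesis $(i_0, j_0) \notin E$ simultaneously kills the quadratic cross-term in $g$ and ensures that no $w$ variable carries a product $x_{i_0} y_{j_0}$ dependence, so that the two-parameter perturbation behaves affinely and can be reversed in sign without leaving $S$.
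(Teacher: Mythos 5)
Your proof is correct and follows essentially the same route as the paper's: fix every coordinate except the (at most two) suspect ones, observe that the residual system --- the single constraint together with the $w$-defining equations --- is affine in the remaining variables because the relevant bilinear cross-term is absent, and conclude that the point is the midpoint of two distinct feasible perturbations, contradicting extremality. If anything you are more complete than the paper, whose displayed argument covers only two fractional variables on the same side of the bipartition and leaves the case of a fractional pair $(i_0,j_0)\notin E$ to a ``without loss of generality''; your explicit verification that the missing edge simultaneously removes the quadratic term from the constraint and keeps every $w_{ij}$ affine in the perturbation is exactly the point that needs checking there.
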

\begin{proof}
To prove by contradiction, suppose without loss of generality that $0 < \bar{x}_{1}, \bar{x}_{2} < 1$. Consider the system of equations
\begin{eqnarray}
\bar{a}_1x_1 + \bar{a}_2x_2 + \bar{c} & = & 0, \nonumber \\
w_{1j} - x_1 \bar{y}_j & = & 0  \ \forall j: (1,j) \in E \nonumber\\
w_{2j} - x_2 \bar{y}_j & = & 0 \ \forall j: (2,j) \in E, \nonumber
\end{eqnarray}
obtained by fixing $x_i=\bar{x}_i, \ y_j=\bar{y}_j$ in (\ref{set: S EQ}), $w_{ij} = \bar{x}_i\bar{y}_j$ $\forall i\in V_1\setminus\{1,2\}, \ \forall j\in V_2$. Since $(\bar{x}_1,\bar{x}_2)$ is in the relative interior of $\{(x_1,x_2)\in [0,1]^2 \,|\, \bar{a}_1x_1 + \bar{a}_2x_2 + \bar{c} = 0\}$,  $(\bar{x},\bar{y}, \bar{w})$ cannot be an extreme point of $S$.
\end{proof}

\subsubsection{Proof of part (ii) of Theorem~\ref{thm:conv}}
First, we prove that the two-variable sets we encounter after fixing variables are SOCP representable. 

\begin{proposition}\label{prop: hyp is SOCP} Let 
$
S_0 = \{(x,y)\in[0,1]^2\,|\, \ ax + by + q x y + c = 0\}.
$ 
Then, $\convex(S_0)$ {is SOCP representable}.
\end{proposition}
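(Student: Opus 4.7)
My plan is to combine Lemma~\ref{lemma: convex of convex eq} with an explicit analysis of the rectangular hyperbola defined by the equation. Since $f(x,y) := ax + by + qxy + c$ is continuous on the compact set $[0,1]^2$, Lemma~\ref{lemma: convex of convex eq} gives
\[
\convex(S_0) = \convex\bigl(\{(x,y)\in [0,1]^2 : f \le 0\}\bigr) \,\cap\, \convex\bigl(\{(x,y)\in [0,1]^2 : f \ge 0\}\bigr).
\]
Because the intersection of two SOCP representable sets is SOCP representable, it suffices to show that each of these two convex hulls is SOCP representable.

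If $q = 0$, $f$ is affine and both sub-level sets are polyhedra, so the claim is immediate. Assume $q \neq 0$. Completing the rectangle rewrites $f(x,y) = 0$ as the rectangular hyperbola $(x + b/q)(y + a/q) = k$ with $k := ab/q^2 - c/q$, whose asymptotes $x = -b/q$ and $y = -a/q$ partition $[0,1]^2$ into at most four closed sub-rectangles $R_1,\dots,R_m$. On each $R_i$ the signs of $x + b/q$ and $y + a/q$ are constant, so a sign-flipping affine change of variables $u = \pm(x + b/q)$, $v = \pm(y + a/q)$ places $R_i$ in the nonnegative orthant and transforms $f \le 0$ (respectively $f \ge 0$) into either $uv \ge |k|$ or $uv \le |k|$.

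In the nonnegative orthant, $\{uv \ge |k|\}$ is convex and SOCP representable through the rotated second-order cone $\|(u - v,\ 2\sqrt{|k|})\|_2 \le u + v$. The set $\{uv \le |k|\}$ intersected with a bounded sub-rectangle is non-convex, but its only curved boundary piece, $v = |k|/u$, is a convex curve, so its convex hull is obtained by replacing that arc with the chord joining its two crossings of $\partial R_i$, producing a polygon. Either way, $\convex(\{f\le 0\}\cap R_i)$ is SOCP representable, and the same reasoning applies to $\{f\ge 0\}\cap R_i$. Since
\[
\{f \le 0\} \cap [0,1]^2 \;=\; \bigcup_{i=1}^m \bigl(\{f\le 0\}\cap R_i\bigr)
\]
is a finite union of compact sets whose convex hulls are SOCP representable, Lemma~\ref{lemma: convex of union of convex sets} (applied to a partition of the extreme points by the $R_i$'s) yields that $\convex(\{f\le 0\}\cap[0,1]^2)$ is SOCP representable, and symmetrically for $\{f\ge 0\}$.

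I expect the main obstacle to be the case analysis in the last two paragraphs: depending on whether each asymptote actually pierces $[0,1]$, on the sign of $k$, and on which sides of each $R_i$ the hyperbola crosses, one must correctly determine whether $f \le 0$ corresponds to $uv \ge |k|$ or to $uv \le |k|$ on $R_i$, and, in the polygonal case, pin down the chord endpoints and the remaining extreme points of $\convex(\{f\le 0\}\cap R_i)$. The geometry is straightforward but this bookkeeping is lengthy, and it is where a formal proof will spend most of its effort.
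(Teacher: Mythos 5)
Your argument is correct, but it is organized differently from the paper's proof, so a comparison is worthwhile. The paper first splits the \emph{curve} into its two branches according to the signs of $x-r$ and $y-s$ (with $r=-b/q$, $s=-a/q$), and only then applies Lemma~\ref{lemma: convex of convex eq} to each branch, using the same rotated-cone identity $4uv=(u+v)^2-(u-v)^2$ that you use; the reverse-convex side $\sqrt{\cdot}\geq(x-r)+(y-s)$ is dispatched by citing Lemma~\ref{lemma: reverse convex set}, and the two branches are recombined with Lemma~\ref{lemma: convex of union of convex sets}. You instead apply Lemma~\ref{lemma: convex of convex eq} globally to $f=0$ over $[0,1]^2$ first, and then decompose each inequality region $\{f\lessgtr 0\}$ by the asymptotes into sub-rectangles, handling the concave-side pieces by a direct chord argument rather than by Lemma~\ref{lemma: reverse convex set}. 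The two routes rest on the same three ingredients (Lemma~\ref{lemma: convex of convex eq}, the second-order-cone rewriting of $uv=\text{const}$, and convexification of unions), but yours trades the clean two-branch case split and the black-box use of Lemma~\ref{lemma: reverse convex set} for more sub-rectangles and an explicit polygon description; the paper's ordering keeps the case analysis to $\tau>0$, $\tau<0$, $\tau=0$ and two quadrants, which is why its bookkeeping is shorter. Two small points to tighten in yours: the right-hand side after the sign flip is $\pm k$ rather than $|k|$, and when it is negative the constraint on that sub-rectangle is vacuous or infeasible (both trivially representable); and Lemma~\ref{lemma: convex of union of convex sets} as stated partitions extreme points, so you should either invoke the induced partition of extreme points, as you note parenthetically, or simply use $\convex(\bigcup_i T_i)=\convex(\bigcup_i\convex(T_i))$ directly. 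Neither issue affects the validity of the approach.
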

\begin{proof}
We may assume $S_0\neq \emptyset$ and $q\neq 0$, otherwise the result follows trivially. 
Define $r=-b/q, \ s=-a/q$ and $\tau=(ab-cq)/q^2$ to write  $ax + by + q x y + c =0$ equivalently as 
\begin{align}
(x-r)(y-s)=\tau.\label{hyp_eq}
\end{align}
If $\tau=0$, then (\ref{hyp_eq}) is equivalent to $x=r$ or $y=s$. In this case, $S_0 = \{(x,y)\in[0,1]^2\,|\, x=r\}\cup \{(x,y)\in[0,1]^2\, |\, y=s\}$ and hence $\convex(S_0)$ is a polytope. 
Suppose $\tau > 0$ (if $\tau < 0$, we multiply (\ref{hyp_eq}) by $-1$ and repeat the same proof with $x-r$ and $\tau$ replaced with $-(x-r)$ and $-\tau$). Either $x-r,y-s\geq 0$ or $x-r,y-s \leq 0$. Thus, $S_0 = S_0^>\cup S_0^<$, where $S_0^>=\{(x,y)\in[0,1]^2\,|\, x-r,y-s \geq 0, \ (\ref{hyp_eq})\}$ and $S_0^<=\{(x,y)\in[0,1]^2 \,|\, x-r,y-s \leq 0, \ (\ref{hyp_eq})\}$. Next, we show that if $S_0^>\neq \emptyset$, then $\convex(S_0^>)$ is SOCP representable.
Using that $4uv = (u+v)^2-(u-v)^2$, we can rewrite (\ref{hyp_eq}) as
\begin{align*}
\sqrt{[(x-r)-(y-s)]^2 + (2\sqrt{\tau})^2} = (x-r)+(y-s).
\end{align*}
It now follows from Lemma~\ref{lemma: convex of convex eq} that $\convex(S_0^>) = \convex(S_{1}^>) \cap \convex(S_{2}^>)$, where 
\begin{align*}
S_1^> = \{(x,y)\in [0,1]^2 \,|\, x-r,y-s \geq 0, \ \sqrt{[(x-r)-(y-s)]^2 + (2\sqrt{\tau})^2} \leq (x-r)+(y-s)\} \ \\
S_2^> = \{(x,y)\in [0,1]^2 \,|\, x-r,y-s\geq 0, \ \sqrt{[(x-r)-(y-s)]^2 + (2\sqrt{\tau})^2} \geq (x-r)+(y-s)\}.
\end{align*} 
Notice that $S_1^>$ is SOCP representable. Also, as the square root term in the definition of $S_2^>$ is a convex function in $x$ and $y$, it follows from Lemma~\ref{lemma: reverse convex set} that $S_2^>$ is a polytope. Thus, $\convex(S_0^>)$ is SOCP representable. Similarly, we can prove that $\convex(S_0^<)$ is SOCP by repeating the arguments above after replacing $x-r,y-s$ with $-(x-r),-(y-s)$. Therefore, $\convex(S_0)=\convex(S_0^>\cup S_0^<)=\convex(\convex(S_0^>)\cup \convex(S_0^<))$ {is SOCP representable} by Lemma~\ref{lemma: convex of union of convex sets}.
\end{proof}

\begin{proposition}\label{prop: parabola is SOCP}
Let 
$
S_0 = \{(x,y)\in[0,1]^2 \,|\, \ y = a_0 + a_1 x + a_2 x^2\}.
$ 
Then $\convex(S_0)$ {is SOCP representable}.
\end{proposition}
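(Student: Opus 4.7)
The plan is to mirror the strategy used in the proof of Proposition~\ref{prop: hyp is SOCP}: I would use Lemma~\ref{lemma: convex of convex eq} to split the graph of the parabola into its two ``sides,'' show that one side is already a convex SOCP-representable region while the other side has polyhedral convex hull via Lemma~\ref{lemma: reverse convex set}, and then take the intersection.

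First, I would dispose of the degenerate case $a_2 = 0$, where $S_0$ is simply a line segment (possibly empty) intersected with $[0,1]^2$ and the claim is trivial. For $a_2 \neq 0$, I would set $g(x) := a_0 + a_1 x + a_2 x^2$ and $f(x,y) := y - g(x)$, and apply Lemma~\ref{lemma: convex of convex eq} to write
\[
\convex(S_0) = \convex(S_0^+) \cap \convex(S_0^-),
\]
where $S_0^+ := \{(x,y) \in [0,1]^2 : f(x,y) \geq 0\}$ and $S_0^- := \{(x,y) \in [0,1]^2 : f(x,y) \leq 0\}$. The structural observation I would exploit is that exactly one of $f, -f$ is a convex function of $(x,y)$: if $a_2 > 0$ then $-f$ is convex (its Hessian has eigenvalues $2 a_2$ and $0$), and if $a_2 < 0$ then $f$ is convex.

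Next, I would handle the two sides. In the case $a_2 > 0$, the set $S_0^+ = \{(x,y) \in [0,1]^2 : y \geq g(x)\}$ is the epigraph of the convex function $g$ intersected with the box and is therefore already convex; the defining quadratic $y \geq a_0 + a_1 x + a_2 x^2$ is a standard (rotated) second-order cone constraint, so $\convex(S_0^+) = S_0^+$ is SOCP representable. The opposite side $S_0^-$ can be rewritten as $\{(x,y) \in [0,1]^2 : (-f)(x,y) \geq 0\}$ with $-f$ convex, so Lemma~\ref{lemma: reverse convex set} applies and $\convex(S_0^-)$ is a polytope. The case $a_2 < 0$ is completely symmetric, with the roles of $S_0^+$ and $S_0^-$ swapped. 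In either case, $\convex(S_0)$ is the intersection of an SOCP-representable set with a polytope, and therefore SOCP representable.

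The only obstacle I foresee is pure bookkeeping: keeping the sign of $a_2$ correctly aligned with which side is the ``convex'' one and which side is the ``reverse-convex'' one, and confirming that the intersection with the box $[0,1]^2$ (required by the hypothesis of Lemma~\ref{lemma: reverse convex set}) is already built into our definitions of $S_0^\pm$. Beyond that, the proof should be noticeably shorter than that of Proposition~\ref{prop: hyp is SOCP}, because the nonconvex side collapses to a polytope in a single application of Lemma~\ref{lemma: reverse convex set} rather than needing a further case split on the sign of the product $(x-r)(y-s)$.
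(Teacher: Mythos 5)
Your proposal is correct and follows essentially the same route as the paper: split via Lemma~\ref{lemma: convex of convex eq} into a convex (SOCP-representable) side and a reverse-convex side whose hull is a polytope by Lemma~\ref{lemma: reverse convex set}, then intersect. The only difference is cosmetic --- the paper makes the second-order cone representation explicit by completing the square and using the identity $4t=(t+1)^2-(t-1)^2$, whereas you invoke the standard fact that the epigraph of a convex quadratic is a rotated second-order cone constraint.
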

\begin{proof}
We may assume $S_0\neq \emptyset$ and $a_2\neq 0$, otherwise the result follows trivially. By completing squares, we can write $y = a_0+a_1 x + a_2 x^2$ equivalently as 
$
(x + 0.5a_1/a_2)^2-(a_1/2a_2)^2  + a_0/a_2 = y/a_2,
$
and then as
\begin{align}
(x+\bar{a})^2 = t \ \Leftrightarrow \ \sqrt{(x+\bar{a})^2+\left(\frac{t-1}{2	}\right)^2} = \frac{t+1}{2},
\end{align}
where $\bar{a}=0.5a_1/a_2, \ t= y/a_2 + (a_1/2a_2)^2 - a_0/a_2$, using that $4t = (t+1)^2 - (t-1)^2$.
It now follows from Lemma~\ref{lemma: convex of convex eq} that $\convex(S_0) = \convex(S_{1}) \cap \convex(S_{2})$, where 
\begin{align*}
S_1 = \{(x,y)\in [0,1]^2\,|\, \sqrt{(x+\bar{a})^2+\left(\frac{t-1}{2	}\right)^2} \leq \frac{t+1}{2}\} \ \\
S_2 = \{(x,y)\in [0,1]^2 \,|\,\sqrt{(x+\bar{a})^2+\left(\frac{t-1}{2	}\right)^2} \geq \frac{t+1}{2}\}.
\end{align*} 
Notice that $S_1$ is SOCP representable. Also, as the square root term in the definition of $S_2$ is a convex function in $x$ and $y$ (because $t$ is an affine function of $y$), it follows from Lemma~\ref{lemma: reverse convex set} that $S_2$ is a polytope. Thus, $\convex(S_0)$ is SOCP representable.
\end{proof}

\begin{proposition}\label{prop: hyp is SOCP in space of w} Let 
$
S_0 = \{(x,y,w)\in[0,1]^3\,|\, ax + by + q w + c = 0, \ w = x y\}.
$ 
Then, $\convex(S_0)$ {is SOCP representable}.
\end{proposition}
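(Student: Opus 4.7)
The plan is to reduce Proposition~\ref{prop: hyp is SOCP in space of w} to either Proposition~\ref{prop: hyp is SOCP} or Proposition~\ref{prop: parabola is SOCP} by using the linear equation $ax+by+qw+c=0$ to eliminate one variable, and then to lift back to $(x,y,w)$-space using Lemma~\ref{lemma: convex of intersection with affine set}. I split on whether $q$ vanishes.

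When $q\neq 0$, I solve the linear equation for $w=-(ax+by+c)/q$, which is an affine function of $(x,y)$. Substituting into $w=xy$ yields the single hyperbolic equation $q\,xy+ax+by+c=0$. Let $T:=\{(x,y)\in[0,1]^2 \,|\, q\,xy+ax+by+c=0\}$. Since $(x,y)\in[0,1]^2$ forces $xy\in[0,1]$, the box constraint on $w$ is automatically implied, so $S_0$ coincides with the graph of the affine map $(x,y)\mapsto -(ax+by+c)/q$ over $T$. By Proposition~\ref{prop: hyp is SOCP}, $\convex(T)$ is SOCP representable, and by Lemma~\ref{lemma: convex of intersection with affine set} the convex hull $\convex(S_0)$ equals the graph of the same affine map over $\convex(T)$, hence is also SOCP representable.

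When $q=0$, the linear constraint degenerates to $ax+by+c=0$. If $a=b=0$ then either $S_0=\emptyset$ (when $c\neq 0$) or the first equation is vacuous and $S_0=\{(x,y,w)\in[0,1]^3\,|\,w=xy\}$, whose convex hull is the polyhedral McCormick envelope and therefore SOCP representable. Otherwise, without loss of generality $a\neq 0$; solve $x=-(by+c)/a$ and substitute into $w=xy$ to obtain $w=-(by^2+cy)/a$, a univariate quadratic in $y$ over the subinterval $[y_L,y_U]\subseteq[0,1]$ consisting of those $y$-values for which $-(by+c)/a\in[0,1]$. After an affine rescaling of $y$ onto $[0,1]$, this falls exactly into the setting of Proposition~\ref{prop: parabola is SOCP}, yielding an SOCP representation of the convex hull of the parabolic graph in $(y,w)$-space. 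Lemma~\ref{lemma: convex of intersection with affine set}, applied with $x$ expressed as an affine function of $y$, then lifts this back to an SOCP representation of $\convex(S_0)$.

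The main obstacle is the $q=0$ case: after elimination the effective domain of the remaining variable is a subinterval of $[0,1]$ rather than the full unit interval, and one must either rescale carefully so that Proposition~\ref{prop: parabola is SOCP} applies verbatim, or observe that the proof of that proposition (a direct application of Lemmas~\ref{lemma: convex of convex eq} and~\ref{lemma: reverse convex set}) goes through unchanged on any box. The $q\neq 0$ case is essentially immediate once one notices that the $w\in[0,1]$ constraint is automatic from $w=xy$, so that Lemma~\ref{lemma: convex of intersection with affine set} can be invoked without any additional bookkeeping.
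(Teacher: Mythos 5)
Your proof is correct and follows essentially the same route as the paper's: for $q\neq 0$ you project onto $(x,y)$ and invoke Proposition~\ref{prop: hyp is SOCP} with Lemma~\ref{lemma: convex of intersection with affine set}, and for $q=0$ you eliminate $x$ (or handle the degenerate $a=b=0$ case via McCormick) and reduce to the parabola of Proposition~\ref{prop: parabola is SOCP} after an affine rescaling of the $y$-interval, exactly as the paper does. The only cosmetic difference is that the paper enumerates four subcases for $q=0$ while you fold them into two via a without-loss-of-generality argument.
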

\begin{proof}
If $q\neq 0$, then we can write
\begin{eqnarray}\label{eq:soqneq0}
S_0 = \{(x,y,w)\in[0,1]^2 \times \mathbb{R}\,|\, (x,y)\in B_0, \ w = (-c-ax-by)/q\},
\end{eqnarray}
where $B_0 = \{(x,y)\in[0,1]^2 \,|\,  ax + by + q x y + c = 0\}$. (Note that the bounds on $w$ are automatically enforced in (\ref{eq:soqneq0}) and it is sufficient to say $w \in \mathbb{R}$). 
Hence, by Proposition~\ref{prop: hyp is SOCP} and Lemma~\ref{lemma: convex of intersection with affine set}, $\convex(S_0)$ {is SOCP representable}. 

Now, suppose $q=0$. Four cases:
(i) $a,b=0$. In this case, we may assume $c=0$, otherwise $S_0=\emptyset$. Then, 
$
S_0 = \{(x,y,w)\in[0,1]^3 \,|\, w = xy\},
$
in which  case $\convex(S_0)$ is a well known polytope given by the McCormick envelope.
(ii) $a = 0$, $b \neq 0$. In this case, if $-c/b \notin [0,1]$, then $S_0$ is infeasible. Otherwise, this case is trivial. 
(iii) $a \neq 0$, $b = 0$. Similar to previous case. 
(iv) $a\neq 0$ and $b\neq 0$. In this case, we can solve $ax + by + c = 0$ for $x$, i.e. $x = (-c - by)/a$. Let $[\alpha, \beta]$ be the bounds on $y$ such that the line $ax + by + c = 0$ intersects the $[0,1]^2$ box. If $\alpha = \beta$, then we can set $y = \alpha$ and the result follows trivially. Otherwise, substitute in $w=xy$ to rewrite $S_0$ as following
$$
S_0 = \{(x,y,w)\in\mathbb{R} \times [\alpha, \beta] \times [0,1] \,|\, (y,w)\in B_0, \ x = (-by-c)/a\},
$$
where 
$B_0 = \{(y,w)\in[\alpha, \beta] \times [0,1]\,|\, w = (-c/a)y-(b/a)y^2\}$.
Now, it is straightforward via Proposition~\ref{prop: parabola is SOCP} (affinely scale $y$ to have bound of $[0, 1]$) and Lemma~\ref{lemma: convex of intersection with affine set} that $\convex(S_0)$ is a SOCP representable set.
\end{proof}

Now we are ready to prove part (ii) of Theorem~\ref{thm:conv}. 

\begin{proposition}\label{thm: convex of single eq is SOCP in the space of w}
Let ${S}$ be the set defined in (\ref{set: S EQ}). Then $\convex({S})$ is SOCP representable.
\end{proposition}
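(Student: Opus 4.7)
The plan is exactly the one sketched in Remark~\ref{rem:size}. By Proposition~\ref{prop: extreme points of a single eq}, every extreme point of $S$ is obtained by choosing a set $U$ of one of the three admissible forms and assigning $0/1$ values to all coordinates outside $U$. Since the set of extreme points of $S$ partitions into finitely many such ``slices'', Lemma~\ref{lemma: convex of union of convex sets} reduces the task to showing that, for each admissible $U$ and each admissible $0/1$ completion of the remaining coordinates, the resulting slice $S_U$ has SOCP representable convex hull.

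I would handle the edge case $U=\{i_0,j_0\}$ with $(i_0,j_0)\in E$ first. After the substitutions, every $w_{ij}$ with $i\neq i_0$ and $j\neq j_0$ is a constant; every $w_{i_0 j}$ with $j\neq j_0$ equals $\bar y_j \, x_{i_0}$ and is therefore an affine function of the single variable $x_{i_0}$; and every $w_{i j_0}$ with $i\neq i_0$ is an affine function of $y_{j_0}$. Plugging these expressions into (\ref{eq_single}) leaves an equation of the form $a' x_{i_0} + b' y_{j_0} + q' w_{i_0 j_0} + c' = 0$, together with $w_{i_0 j_0}=x_{i_0}y_{j_0}$ and the $[0,1]$ bounds. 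Proposition~\ref{prop: hyp is SOCP in space of w} shows that the convex hull of the three-variable set in $(x_{i_0},y_{j_0},w_{i_0 j_0})$ is SOCP representable, and Lemma~\ref{lemma: convex of intersection with affine set} (applied once for each affinely-determined auxiliary $w$ coordinate) lifts this to an SOCP representation of $\convex(S_U)$ in the full extended space.

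For the two isolated-vertex cases $U=\{i_0\}$ with $i_0\in V_1$ isolated (and the symmetric one in $V_2$), no edge is incident to $U$, so every $w$ coordinate is already fixed to a constant. Equation (\ref{eq_single}) then reduces to a single affine equation in $x_{i_0}$, and $S_U$ is at most a one-dimensional box-constrained affine set --- trivially polyhedral and hence SOCP representable. Applying Lemma~\ref{lemma: convex of union of convex sets} over the finite collection of admissible $(U,\textup{fixing})$ pairs then concludes the argument.

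The only real work is the bookkeeping in the edge case, verifying that the $w$ coordinates disentangle cleanly into one genuinely bilinear coordinate $w_{i_0 j_0}$ and several affinely-determined ones, so that Proposition~\ref{prop: hyp is SOCP in space of w} can be applied to exactly a three-variable subsystem and Lemma~\ref{lemma: convex of intersection with affine set} can reinstate the rest. Beyond that, the result is a straightforward assembly of the preliminary lemmas and propositions, and I do not anticipate any substantive obstacle.
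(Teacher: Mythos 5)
Your proposal is correct and follows essentially the same route as the paper: invoke Proposition~\ref{prop: extreme points of a single eq} to enumerate the finitely many slices, handle the isolated-vertex slices as affine (hence polyhedral) sets, reduce the edge case to the three-variable set of Proposition~\ref{prop: hyp is SOCP in space of w} with the remaining $w$-coordinates reinstated via Lemma~\ref{lemma: convex of intersection with affine set}, and finish with Lemma~\ref{lemma: convex of union of convex sets}. The bookkeeping you describe for the edge case is exactly what the paper writes out.
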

\begin{proof}
By Proposition~\ref{prop: extreme points of a single eq}, we can fix various sets of $x$ and $y$ variables that corresponds to the $U$ sets and prove that the convex hull of each of these sets is SOCP representable. Case (i): $|U| = 1$. In this case, the set of unfixed variables satisfy a set of linear equations. Thus this set is clearly SOCP representable. Case (ii): $U = \{(i_0, j_0)\}$, where $(i_0, j_0)\in E$. In this case, the set of unfixed variables satisfy the following constraints:
\begin{eqnarray}
ax_{i_0} + by_{j_0} + q w_{i_0 j_0} + c & = & 0, \\
w_{i_0 j_0} &=& x_{i_0} y_{j_0}\\
w_{i j_0} &=& \bar{x}_i y_{j_0} \ \forall (i,j_0) \in E, i \neq i_0\\
w_{i_0 j} &=& \bar{y}_j x_{i_0} \ \forall (i_0,j) \in E, j \neq j_0,
\end{eqnarray}
where the bound constraints on $w_{i j_0}$ and $w_{i_0 j}$ variables are not needed explictly. Thus, by Proposition~\ref{prop: hyp is SOCP in space of w} and Lemma~\ref{lemma: convex of intersection with affine set}, the above set is SOCP representable. Thus, by Lemma ~\ref{lemma: convex of union of convex sets}, we obtain that $\convex({S})$ is SOCP representable.
\end{proof}
%%%%%%%%%%%%%%%%%%%%%%%%%
%%%%%%%%%%%%%%%%%%%%%%%%%
%%%%%%%%%%%%%%%%%%%%%%%%%

\subsection{Proof of Theorem~\ref{thm:str}}\label{sec:thm2}

In order to prove Theorem~\ref{thm:str} it is sufficient to prove that:
\begin{eqnarray}\label{eq:SDP}
\textup{proj}_{x, y, w}\left(S^{SDP}\right) \supseteq S^{SOCP}
\end{eqnarray} and 
\begin{eqnarray}\label{eq:QBP}
S^{QBP} \supseteq S^{SOCP}.
\end{eqnarray}
We prove these two containments next. 
\begin{proposition}
For any BBP, (\ref{eq:SDP}) holds. 
\end{proposition}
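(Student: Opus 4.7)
The plan is to show that every point in $S^{SOCP}$ admits a feasible SDP lift whose projection onto the $(x,y,w)$ coordinates is that point. Given $(\bar x,\bar y,\bar w)\in S^{SOCP}$, and using that $S^{SOCP}\subseteq \bigcap_k \textup{conv}(S_k)$, I would fix an arbitrary index (say $k=1$) and use compactness of $S_1$ together with Carathéodory's theorem to write
\[
(\bar x,\bar y,\bar w) \;=\; \sum_{\ell} \lambda_\ell\, (x^\ell,y^\ell,w^\ell),\qquad \lambda_\ell\geq 0,\ \sum_\ell \lambda_\ell=1,\ (x^\ell,y^\ell,w^\ell)\in S_1,
\]
where by definition of $S_1$ each summand satisfies the bilinear identity $w^\ell_{ij}=x^\ell_i y^\ell_j$ on every $(i,j)\in E$.

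I would then form the candidate SDP lift
\[
M \;=\; \sum_{\ell} \lambda_\ell \begin{pmatrix} 1 \\ x^\ell \\ y^\ell \end{pmatrix}\begin{pmatrix} 1 & (x^\ell)^\top & (y^\ell)^\top \end{pmatrix},
\]
which is positive semidefinite as a convex combination of rank-one PSD matrices. Its first row and column recover $(1,\bar x,\bar y)$, while the off-diagonal $x$-$y$ block $W$ has entries $W_{ij}=\sum_\ell \lambda_\ell x^\ell_i y^\ell_j=\sum_\ell \lambda_\ell w^\ell_{ij}=\bar w_{ij}$ for every $(i,j)\in E$. Hence the $(x,y,w)$-projection of $M$ is exactly $(\bar x,\bar y,\bar w)$.

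The remaining task is to verify the linear constraints of $S^{SDP}$. Because BBP is bipartite bilinear—no $x_ix_{i'}$ or $y_jy_{j'}$ monomials appear—each quadratic equation depends only on the $W$-block entries at edges of $E$ together with the $(x,y)$ entries; the $k$-th constraint therefore reduces to $\sum_{(i,j)\in E}(Q_k)_{ij}\bar w_{ij}+a_k^\top \bar x+b_k^\top \bar y+c_k=0$, which holds since $(\bar x,\bar y,\bar w)\in \textup{conv}(S_k)$. Any RLT-type bound-strengthening that $S^{SDP}$ may include, such as $X_{ii}\leq x_i$ or $Y_{jj}\leq y_j$ (arising from $(x,y)\in[0,1]^{n_1+n_2}$), passes through the averaging since each scalar $x^\ell_i,y^\ell_j\in[0,1]$ satisfies $(x^\ell_i)^2\leq x^\ell_i$ and $(y^\ell_j)^2\leq y^\ell_j$.

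The main point to be careful about—rather than a real technical obstacle—is pinning down which inequalities are included in the ``standard SDP relaxation''. For the bare Shor formulation and for any of its usual bound-strengthened variants, the rank-one averaging construction above suffices, and no argument beyond the convex-combination lift together with the bipartite structure of BBP is needed.
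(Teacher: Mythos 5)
Your proof is correct, and it reaches the conclusion by a somewhat different route than the paper. The paper argues in two modular steps: first the per-row containment $\textup{proj}_{x,y,w}(T^k)\supseteq\textup{conv}(S^k)$ (each point of $S^k$ has a rank-one PSD lift and $T^k$ is convex), and then a separate claim that intersecting the projections equals projecting the intersection, which holds because the set of feasible lifting matrices $H$ for a fixed $(\bar x,\bar y,\bar w)$ is the same for every row $k$. You instead collapse these two steps into one explicit construction: a Carath\'eodory decomposition of $(\bar x,\bar y,\bar w)$ inside $\textup{conv}(S_1)$ alone yields a PSD matrix $M$ whose $E$-projection is $\bar w$ (since each $S_k$ imposes $w_{ij}=x_iy_j$ on all of $E$, not just the edges of row $k$), and the remaining rows' constraints are linear in $(x,y,w)$ and so follow directly from membership in each $\textup{conv}(S_k)$. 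Both arguments turn on the same underlying observation---the only inter-row coupling in the SDP is through $H$, and $H$'s feasibility is row-independent---but yours makes the lift concrete while the paper's isolates the projection-commutes-with-intersection fact as a reusable claim. Your closing caveat about which inequalities constitute the ``standard'' SDP relaxation is moot here: the paper defines $S^{SDP}$ by exactly the row equations, $\textup{proj}_E(H)=w$, and the PSD condition, with no additional RLT cuts, so your construction covers everything required.
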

\begin{proof}
In order to prove (\ref{eq:SDP}), it is convenient to introduce some notation. Let $H$ be the matrix variable representing $\left[\begin{array}{c}x \\ y\end{array}\right][x^{\top} y^{\top}]$. We write $w = \textup{proj}_E(H)$, to imply that if $(i,j) \in E$, then $w_{ij} = \frac{1}{2}\left(H_{i(j+n_1)} + H_{(j + n_1)i}\right)$. 

Then the standard SDP relaxation may be written as:
\begin{eqnarray} 
\sum_{ij \in E}(Q_k)_{ij}w_{ij} + a_k^{\top}x + b_k^{\top}y + c_k & = & 0, \ k\in\{1, \dots, m\} \label{eq:row}\\
\textup{proj}_E(H) & = & w \label{eq:projw}\\
\left[\begin{array}{cc} H & [x^{\top} y^{\top}] \\ \left[\begin{array}{c}x \\ y\end{array}\right] & 1\end{array} \right] &\succeq& 0. \label{eq:sdpcon}
\end{eqnarray} 
Let $$T^k: = \{ (x, y, H, w)\,|\, (\ref{eq:row}) \textup{ corresponding to }k, (\ref{eq:projw}), \textup{ and }(\ref{eq:sdpcon})\}$$ and as before let $$S^k:= \{(x, y, w)\,|\, (\ref{eq:row}) \textup{ corresponding to }k, w_{ij} = x_i y_j \ \forall (ij) \in E\}.$$ Then by construction 
\begin{eqnarray}
\textup{proj}_{x,y,w}\left(T^k\right) \supseteq \textup{conv}(S^k)\label{eq:perrow}.
\end{eqnarray} 
Next we need the following:
\paragraph{Claim 1} $\bigcap_{k =1}^m \textup{proj}_{x,y,w}\left(T^k\right) =  \textup{proj}_{x,y,w}\left(\bigcap_{k =1}^m \left(T^k\right)\right)$: Trivially we have that, $$\bigcap_{k =1}^m \textup{proj}_{x,y,w}\left(T^k\right) \supseteq \textup{proj}_{x,y,w}\left(\bigcap_{k =1}^m \left(T^k\right)\right),$$ holds. 

We now verify the converse. 
%It is sufficient to prove that $$\textup{proj}_{x,y,w}\left(T^{k_1}\right) \bigcap \textup{proj}_{x,y,w}\left(T^{k_2}\right) \subseteq \textup{proj}_{x,y,w}\left(T^{k_1} \bigcap T^{k_2}\right),$$
%where $k_1 \neq k_2$.
For some $(\bar{x},\bar{y}, \bar{w}) \in \textup{proj}_{x,y,w}\left(T^{k}\right)$, let $$\mathcal{H}^k(\bar{x}, \bar{y}, \bar{w}):= \left\{H \,|\, (\bar{x}, \bar{y}, \bar{w}, H) \in T^k\right\}.$$ Then observe that  $\mathcal{H}^k(\bar{x}, \bar{y}, \bar{w})$ is the set of matrices $H$ satisfying
\begin{eqnarray}
\textup{proj}_E(H) &=& \bar{w}\\
\left[\begin{array}{cc} H & [\bar{x}^{\top} \bar{y}^{\top}] \\ \left[\begin{array}{c}\bar{x} \\ \bar{y}\end{array}\right] & 1\end{array} \right] &\succeq& 0.
\end{eqnarray}
Thus $\mathcal{H}^k(\bar{x},\bar{y}, \bar{w})$ is independent of $k$, i.e. if $(\bar{x},\bar{y}, \bar{w}) \in \bigcap_{k = 1}^m\textup{proj}_{x,y,w}\left(T^{k}\right)$  then $\mathcal{H}^{k_1}(\bar{x},\bar{y},\bar{w}) = \mathcal{H}^{k_2}(\bar{x},\bar{y},\bar{w})$ for all $k_1 \neq k_2$. Therefore in particular, if $(\bar{x},\bar{y}, \bar{w}) \in \bigcap_{k = 1}^m\textup{proj}_{x,y,w}\left(T^{k}\right) $, then there exists $\bar{H}$ such that $(\bar{x}, \bar{y}, \bar{w}, \bar{H}) \in \bigcap_{k = 1}^mT^{k}$. Thus,  $(\bar{x},\bar{y}, \bar{w}) \in \textup{proj}_{x,y,w}\left( \bigcap_{k = 1}^mT^{k}\right).$~$\diamond$

Now, we return to the proof of the original statement. 
Intersecting (\ref{eq:perrow}) for all $k \in \{1, \dots, m\}$ we obtain, 
\begin{eqnarray}
\textup{proj}_{x,y,w}\left(S^{SDP}\right) = \textup{proj}_{x,y,w}\left(\bigcap_{k =1}^m \left(T^k\right)\right)  = \bigcap_{k =1}^m \textup{proj}_{x,y,w}\left(T^k\right) \supseteq  \bigcap_{k =1}^m \textup{conv}(S^k) = S^{SOCP}, \nonumber
\end{eqnarray}
where the first equality is by definition of $S^{SDP}$, the second equality via Claim 1, the inequality is due to (\ref{eq:perrow}) and the last equality is by definition of $S^{SOCP}$.
\end{proof}

\begin{proposition}
For any BBP, (\ref{eq:QBP}) holds. 
\end{proposition}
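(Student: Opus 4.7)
The plan is to establish the containment $S^{QBP} \supseteq S^{SOCP}$ directly from the definitions, by taking an arbitrary point $(\bar{x}, \bar{y}, \bar{w}) \in S^{SOCP}$ and verifying that it satisfies both defining parts of $S^{QBP}$: (a) the system of linear equations obtained by linearizing each BBP constraint via $w_{ij} = x_i y_j$, and (b) membership in the Boolean Quadratic Polytope $\textup{conv}(\{(x,y,w)\in[0,1]^{n_1+n_2+|E|} \,|\, w_{ij} = x_iy_j \ \forall (i,j)\in E\})$.

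For part (a), I would fix any $k\in\{1,\ldots,m\}$ and observe that every point of $S_k$ satisfies the linear equation $\sum_{(i,j)\in E}(Q_k)_{ij}w_{ij} + a_k^{\top}x + b_k^{\top}y + c_k = 0$ by definition. Since this equation is linear in $(x,y,w)$, it is valid on $\textup{conv}(S_k)$, and therefore on $S^{SOCP} = \bigcap_{k=1}^{m}\textup{conv}(S_k)$. Hence $(\bar{x},\bar{y},\bar{w})$ satisfies all $m$ linearized constraints.

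For part (b), the key observation is that for every $k$, $S_k$ is contained in the set $W := \{(x,y,w)\in[0,1]^{n_1+n_2+|E|} \,|\, w_{ij} = x_iy_j \ \forall (i,j)\in E\}$, simply because $w_{ij} = x_iy_j$ is part of the defining constraints of $S_k$. Taking convex hulls preserves inclusion, so $\textup{conv}(S_k) \subseteq \textup{conv}(W)$. Picking any one $k$ (say $k=1$), since $(\bar{x},\bar{y},\bar{w}) \in \textup{conv}(S_1)$, we conclude that $(\bar{x},\bar{y},\bar{w}) \in \textup{conv}(W)$, which is precisely the second set on the right-hand side of (\ref{eq:QBP0}).

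Combining (a) and (b) yields $(\bar{x},\bar{y},\bar{w}) \in S^{QBP}$, proving (\ref{eq:QBP}). There is no real technical obstacle here; the argument is purely a definitional chase exploiting the fact that $S^{SOCP}$ is defined as an intersection of convex hulls that each (i) respect each individual linearized row constraint and (ii) lie inside the $w_{ij}=x_iy_j$ locus before convexification. The only point worth flagging is the asymmetry with the SDP case: unlike the SDP proof, we do not need a projection-commutes-with-intersection lemma, because the common ``lifting'' in $S^{QBP}$ is already the polyhedral set $W$ itself (no auxiliary $H$ variables to reconcile across rows).
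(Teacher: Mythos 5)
Your proof is correct and takes essentially the same route as the paper: the paper simply packages your steps (a) and (b) into the single set-wise containment $\textup{conv}(S^k)\subseteq T^k$, where $T^k$ consists of the box points satisfying the $k$-th linearized row and lying in the Boolean Quadratic Polytope, and then intersects over $k$. Your closing remark about not needing a projection/intersection claim also matches the paper, which invokes such a claim only in the SDP case.
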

\begin{proof}
Recall that $S^{QBP}$ is the set
\begin{eqnarray}
&& \left\{(x, y, w)\in[0,1]^{n_1+n_2+|E|} \,|\, \sum_{(ij) \in E} (Q_k)_{ij}w_{ij} + a_k^{\top}x + b_k^{\top}y + c_k = 0  \ k \in \{1, \dots, m\}\right\} \label{eq:row1}\\
&&\bigcap \textup{ conv}\left(\{(x, y, w) \in[0,1]^{n_1+n_2+ |E|}\,|\, w_{ij} = x_i y_j \ \forall (i,j) \in E\}\right).\label{eq:QBP1}
\end{eqnarray}
Let $$T^k:= \{(x, y, w)\in[0,1]^{n_1+n_2+|E|} \,|\, (\ref{eq:row1}) \textup{ corresponding to }k, (\ref{eq:QBP1})\}$$ and let $$S^k:= \{(x, y, w)\,|\, (\ref{eq:row}) \textup{ corresponding to }k, w_{ij} = x_i y_j \ \forall (ij) \in E\}.$$ Then by construction 
\begin{eqnarray}
T^k \supseteq \textup{conv}(S^k)\label{eq:perrow1}.
\end{eqnarray} 
Intersecting (\ref{eq:perrow1}) for all $k \in \{1, \dots, m\}$ we obtain, 
$$S^{QBP} = \bigcap_{k = 1}^m T^k  \supseteq \bigcap_{k = 1}^m\textup{conv}(S^k) = S^{SOCP}.$$
\end{proof}

\section{Proposed branch-and-bound algorithm}\label{sec:bb}
In this section, we discuss some details of our proposed branch-and-bound algorithm to solve BBP~(\ref{eq:BBP}).

\subsection{Node selection and partitioning strategies}

The most common node selection rule used in the literature is the so-called \textit{best-bound-first}, in which a node with the least lower bound (assuming minimization) is chosen for branching. Other rules may include selection of nodes that have the potential of identifying good feasible solutions earlier. In our computational experiments, we only use best-bound-first rule. Also, we use the most simple partitioning operation: rectangular. Example of other operation adopted in the literature are conical and simplicial~\cite{Linderoth2005}.

\subsection{Variable selection and point of partitioning}\label{sec: variable and branching point selection}

A simple rule for variable selection is to choose a variable with largest range. Another common rule is to prioritize the variable that is most responsive for the approximation error of nonlinear terms. For example, suppose we are optimizing in the extended space of $(x,y,w)$, then we could chose $x_i$ (or $y_j$) for which the absolute error $|\bar{w}_{ij}-\bar{x}_i\bar{y}_j|$ is maximized over the set of all possible pairs $(i,j)$, where $(\bar{x},\bar{y},\bar{w})$ is the relaxation solution for the current node. We refer to this rule as the \textit{gap-error-rule}.

Once the variable is selected, say $x_1$ (without loss of generality), we can list three standard rules for choosing the partitioning point:\\
\textit{Bisection}: partition at the mid point of the domain of $x_1$ in the current node.\\
\textit{Maximum-deviation}: partition at $\bar{x}_1$, where $(\bar{x},\bar{y},\bar{w})$ is the relaxation solution for the current node.\\
\textit{Incumbent}: partition at $x^*_1$, where $(x^*,y^*,w^*)$ is the current best feasible solution, if $x^*_1$ is in the range of $x_1$ in the current node.

Combination of the above rules have also been proposed. For example, Tawarmalani et al. \cite{Sahinidis2005} propose a rule that is a convex combination of bisection and maximum-deviation branching rules (biased towards the maximum-deviation), and uses incumbent branching whenever possible. 

In our proposed algorithm, we use specialized variable and branching point selection rules, which use information collected from multiple disjunctions and, therefore, take into account the coefficients of the constraints in the model in addition to the variable ranges at the current node.
\paragraph{New proposed rule} Note that we always branch on only one set of variables, either $x$ or $y$. We describe our rule assuming we are branching on the $x$ variables.  To further ease exposition, we explain our proposed branching rules for the root node, i.e., we assume that all variables range from $0$ to $1$. %Thus, as  in Section~\ref{sec: Polyhedral relaxation}, consider the set 
Consider the three-variable set:
$$
S_0 = \{(x_{1},y_{1},w_{11})\in{\rr}^3\,|\, q w_{11} + ax_{1} + by_{1} + c = 0, \ w_{11}=x_{1} y_{1}\},
$$
which is obtained by fixing $x_i, y_j$ to either $0$ or $1$ in (\ref{eq_single}), $\forall i\in V_1\setminus\{1\}, \ \forall j\in V_2\setminus \{1\}$. Like the proof of Proposition~\ref{prop: hyp is SOCP in space of w}, there are two cases of interest. 
\begin{itemize}
\item $q\neq 0$. In this case, $w_{11}$ can be written as affine function of $x_1$ and $y_1$. We can then write the projection of $S_0$ in the space of $(x_1,y_1)$ as (we drop the indices to simplify notation, we also drop the word 'Proj')
%In this case, $w_{11}$ can be written as affine function of $x_1$ and $y_1$ for all $(i,j)\in E$ (see Section~\ref{sec: Polyhedral relaxation}). Thus, we consider $S_0$ in the space of $(x_{1},y_{1})$ only, and we drop the sub-indices to simplify notation. Hence, as in Proposition~\ref{prop: hyp is SOCP}, we can rewrite $S_0$ as
$$S_0 = \{(x,y)\in[0,1]^2 \,|\, \ (x-r)(y-s) = \tau\},$$
where $r,s,\tau$ are constants. The equation
$(x-r)(y-s) = \tau$ represents a hyperbola with asymptotes $x=r$ and $y=s$. Two typical instances are plotted in Figure~\ref{fig: hyp 2 branches}-\ref{fig: hyp one branch}, where the continuous thick portion of the curves represents $S_0$ and the whole dotted areas represent $\convex(S_0)$.
\begin{figure}[h]
    \centering
    \begin{minipage}{0.48\textwidth}
        \centering
		\includegraphics[scale=0.7]{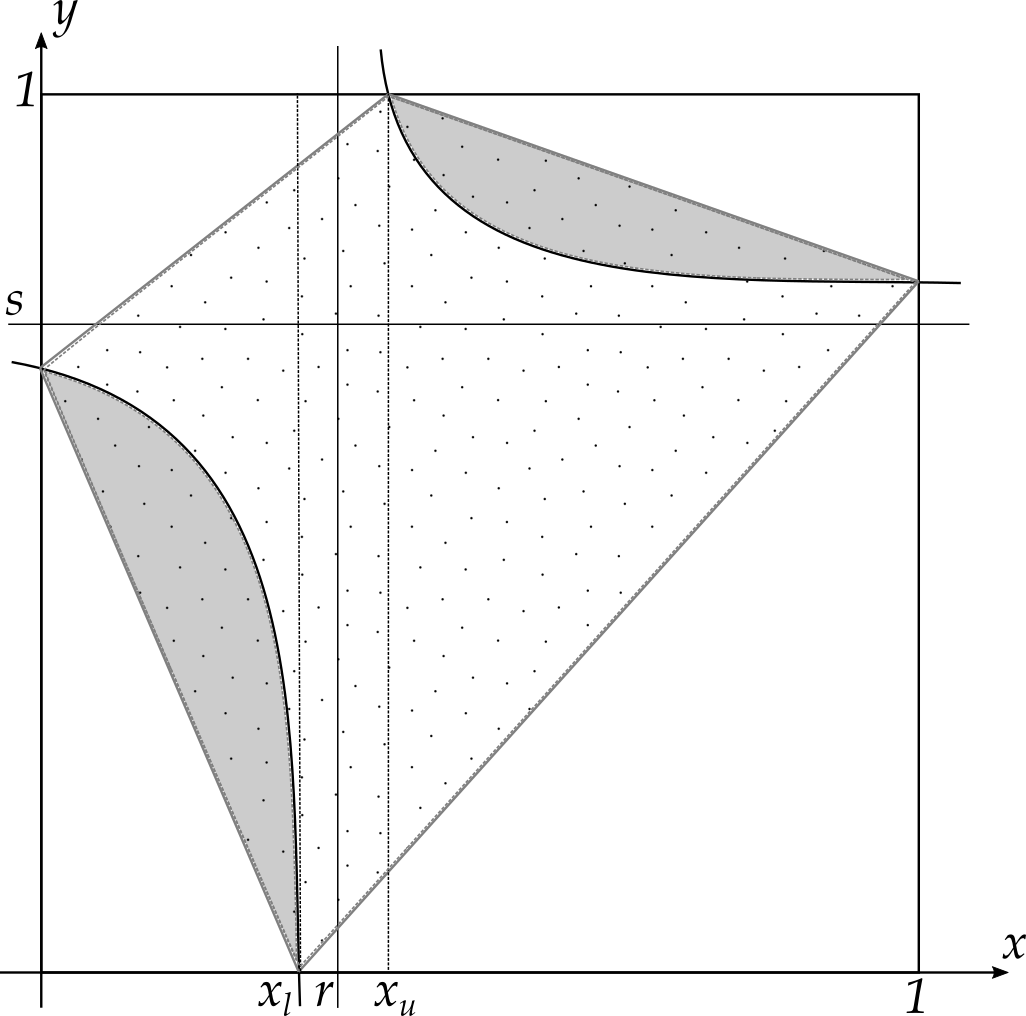}
        \caption{Convex hull of the set defined by the intersection of two branches of a hyperbola with the $[0,1]^2$ box. {Here,} $x_l$ (resp. $x_u$) is the $x$-coordinate of the intersection point of the left (resp. right) branch with the line $y=0$ (resp. $y=1$).\newline}
        \label{fig: hyp 2 branches}
    \end{minipage}%
    \hspace{0.3cm}
    \begin{minipage}{0.48\textwidth}
        \centering
        \includegraphics[scale=0.7]{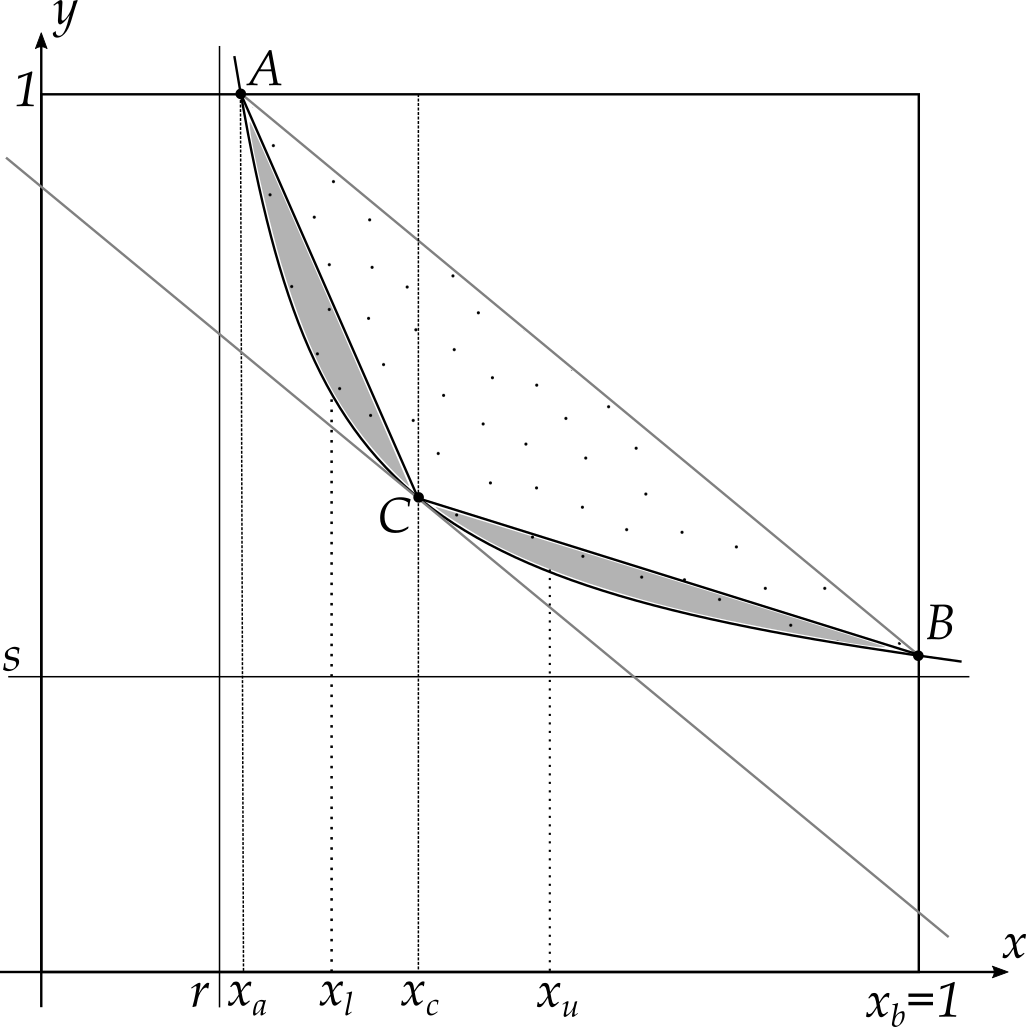} 
        \caption{Convex hull of the set defined by the intersection of a single branch of a hyperbola with the $[0,1]^2$ box.
{Let} $A$ and $B$ are the intersection points of the curve with the $[0,1]^2$ box and $C$ is the point of the curve at which the tangent line is parallel to $AB$. {Then,} $x_a,x_b$ and $x_c$ are the projections of $A,B$ and $C$ onto the $x$ axis.}
        \label{fig: hyp one branch}
    \end{minipage}
\end{figure}
Our goal is to branch at a point that maximizes the eliminated area upon branching. 
%It can be achieved by branching either on $x$ or on $y$, hence we always assume branching on $x$.

Case 1: Both branches of a hyperbola intersect with the $[0,1]^2$ box. Let $x_l$ (resp. $x_u$) be the $x$-coordinate of the intersection point of the left (resp. right) branch with either of the lines $y=0$ or $y=1$. The plot on Figure~\ref{fig: hyp 2 branches} suggests that branching $x$ at any point $x_0\in [x_l,x_u]$ is a reasonable choice for the case where both branches of the hyperbola intersect the $[0,1]^2$ box. Indeed, such branching would eliminate the entire dotted area between the two branches of the curve. 

Case 2: One branch of hyperbola intersects with the $[0, 1]^2$ box. For the case where only one branch intersects the $[0,1]^2$ box, as illustrated in Figure~\ref{fig: hyp one branch}, we could in principle compute $C$ that maximizes the area of the triangle $\bigtriangleup_{ABC}$. To simplify the rule and avoid excessive computations, we simply choose $C$ to be the point at which the tangent line to the curve is parallel to the line $AB$. Moreover, for points in some interval $[x_l,x_u]$ containing $x_c$, the area of the triangle $\bigtriangleup_{ABC}$ does not change much, implying that every point in $[x_l,x_u]$ may be a good choice to branch at. In our computational experiments, we compute $x_l$ and $x_u$ such that $x_c-x_l = \gamma (x_c-x_a)$ and $x_u-x_c = \gamma (x_b-x_c)$ with $\gamma = 2/3$.

\item $q = 0$ and $a\neq 0$ or $b\neq 0$. Without loss of generality assume $b\neq 0$. In this case, $y_{1}$ is an affine function of $x_{1}$ as shown in proof of Proposition~\ref{prop: hyp is SOCP in space of w}. Thus, we can study $S_0$ in the space of $(x_{1},w_{11})$, where it is defined by a parabola and we adopt the same rule defined for the case of Figure~\ref{fig: hyp one branch}, i.e. choose points $x_l$ and $x_u$ as a function of $x_a$ and $x_b$. If the parabola intersects the $[0,1]^2$ box in more than two points, we define $A$ and $B$ to be the left and right most intersection points.

Note that if $a \neq 0$, {then} $x_1$ is an affine function or $y_1$. We can identify appropriate points in the $y_1$ space as above and then translate them to the $x$ space via the affine function.
\end{itemize}

Thus, corresponding to every three-variable set $S_0$, we associate (i) an $x$-variable $x_i$, (ii) an interval $[x_l, x_u]$ within the domain of $x_i$ and (iii) we also approximately compute the area of $\textup{conv}(S_0)$, (either in the space of $(x_1,y_1)$, if $q\neq 0$, or in the space of $(x_1,w_{11})$, if $q=0$), referred to as $A_0$. The actual area we use is that of the polyhedral outer approximation as will be discussed in Section~\ref{sec:Polyhedralrelaxation}.

%For $i\in \{1, \dots, n_1\}$, let $n_i$ be the number of disjunctions $S_0$ for which $x=x_i$ is not fixed and the area $A_0$ computed by Algorithm~\ref{alg: branching rule} is positive. If it happens that $\frac{n_i}{\sum_{l=1}^{n_1}n_l} < 0.01$ for some $i\in [n_1]$ at the root node, then we mark $x_i$ as \emph{irrelevant} for branching and do not branch on $x_i$. 

Once the above data is collected for all disjunctions, we use the following Algorithm to decide on the variable to branch on and the point of partitioning for this variable. 

\begin{algorithm}[H]
\caption{Branching rule}
\label{alg: branching rule}
\begin{algorithmic}[1]
\State \textbf{Input:} $\delta = 1/K$, for some positive integer $K$. Let {$\varepsilon_1,\varepsilon_2 > 0$}. 
\State Let $A_{ik} = 0, \ i \in \{1, \dots, n_1\}, \ k \in \{1, \dots, K\}$. Let $p_i = 0 , \ i \in \{1, \dots, n_1\}$
\State Define $I_{ik} = [(k-1)\delta,k\delta]$, for $k \in \{1, \dots, K\}$ (which defines a partition of the range of $x_i$). 
\For{Each disjunctions $S_0$}
	\State Compute (a) the index $i$ of $x$-variable corresponding to $S_0$, (b) domain $[x_l,x_u]$ and (c) the area $A_0$.
%	\If{$A_0 > 0$}
		\State Set $p_i = p_i + 1$ 
		\State If $[x_l,x_u]\cap I_{ik}\neq \emptyset$ for some $k\in\{1, \dots, K\}$, set $A_{ik} = A_{ik} + A_0$.
%f	\EndIf
\EndFor
\State \For{$i \in \{1, \dots, n_1\}$}
\If{$\frac{p_i}{\sum_{l=1}^{n_1}p_l} < {\varepsilon_1}$}
	\State variable $i$ is declared irrelevant.
\EndIf
\EndFor
\State Let $(i^*,k^*) \in \Argmax\{A_{i,k}\,|\, i\in\{1, \dots, n_1\}, i \textup{ is not irrelevant}, k\in \{1,\dots, K\}\}$
\If{$A_{i^*k^*} \geq {\varepsilon_2}$}
	\State Branch on the variable $x_{i^*}$ at the mid point of the interval $I_{i^*k^*}$.
\Else 	
	\State Use the bisection rule.
\EndIf
\end{algorithmic}
\end{algorithm}
In our computational experiments, whenever we use Algorithm~\ref{alg: branching rule}, we set $\varepsilon_1 = 0.01,~\varepsilon_2 = 1/16$ and $K = 8$. Our implementation is naive, and we have not tried to fine tune any of these parameters.

\section{Computational experiments}
\subsection{Finite Element Updating Model}\label{sec: finite element model}
The instances of BBP that we use come from finite element (FE) model updating in structural engineering. The goal is to update the parameter values in an FE model, so that the model provides same resonance frequencies and mode shapes that are physically measured from vibration testing at the as-built structure. In this study we adopt the modal dynamic residual formulation, for which the details can be found in \cite{Wang2015}. The formulation is briefly summarized as follows. 

Consider the model updating of a structure with $m$ number of degrees-of-freedom (DOFs). Corresponding to stiffness parameters that are being updated, the (scaled) updating variables are first denoted as $x\in [-1,1]^{n_1}$. Since only some DOFs can be instrumented, we suppose $n_2$ of those are not instrumented, leaving $m-n_2$ of them as instrumented. In the meantime, it's assumed that $n_3$ number of vibration modes are measured/observed from the vibration testing data. For each $l$-th measured mode, $\forall l\in \{1, \dots, n_3\}$, the experimental results provide $\lambda_l$ as the square of the (angular) resonance frequency, and $\bar{y}^l\in \mathbb{R}^{m-n_2}$ as the mode shape entries at the instrumented DOFs. In mathematical terms, the modal dynamic residual formulation can be stated as the problem of simultaneously solving the following set of equations on stiffness updating variables $x\in [-1,1]^{n_1}$ and (scaled) unmeasured mode shape entries $y^l\in[-2,2]^{n_2}$, $\forall l\in \{1, \dots, n_3\}$:
\begin{align}
[K_0+\sum_{i=1}^{n_1}x_iK_i-\lambda_lM]
	\begin{bmatrix}
	\bar{y}^l\\
	y^l
	\end{bmatrix}
	= 0, \ l\in \{1, \dots, n_3\},\label{setEqs}
\end{align}
where $M,K_0,K_i\in \mathbb{R}^{m\times m}$, $\forall i\in \{1, \dots, n_1\}$,  $\lambda_l\in \mathbb{R}_+$ and $\bar{y}^l\in \mathbb{R}^{m-n_2}, \ \forall l\in \{1, \dots, n_3\}$, are problem data. In practice, (\ref{setEqs}) is unlikely to have a feasible solution set of $x$ and $y^l$, $l\in \{1, \dots, n_3\}$, because of modeling and measurement inaccuracies. Therefore, we convert the problem of solving (\ref{setEqs}) into an optimization problem that aims to minimize the sum of the residuals, i.e., the absolute difference between left and right-hand-side of each equation. After some affine transformations and simplifications, this optimization problem can be stated as following:
\begin{eqnarray}
&\min & \sum_{k=1}^{m} z_{k}\label{P1}\\
&\textup{s.t.}& |x^{\top}Q_{k}y + a_{k}^{\top}x + b_{k}^{\top}y + c_{k}| = z_{k}, \ k\in \{1, \dots, m\}\nonumber\\
& & x\in[0,1]^{n_1},y\in[0,1]^{n_2},\nonumber
\end{eqnarray}
where $n_2$ and $m$ correspond to {$n_2n_3$} and $mn_3$, respectively, in the notation of (\ref{setEqs}). Finally, (\ref{P1}) is equivalent to the following BBP.
\begin{eqnarray}
&\min & \sum_{k=1}^{m} z'_{k}+ z''_{k}\label{P2}\\
&\textup{s.t.}& x^{\top}Q_{k}y + a_{k}^{\top}x + b_{k}^{\top}y + c_{k} = z'_{k}-z''_{k}, \ k\in \{1, \dots, m\}\nonumber\\
& & x\in[0,1]^{n_1},y\in[0,1]^{n_2}.\nonumber\\
& & 0 \leq z'_{k}, z''_{k} \leq u, \ k \in \{1, \dots, m\}.\nonumber
\end{eqnarray}

\noindent\textbf{Instances:}\\
The simulated structural example is similar to the planar truss structure in \cite{Wang2015}. In order to simulate measurement noise, we add a normal-distributed random variable to the parameters $\lambda^l$ and $\bar{y}^l$, ${\forall} l\in\{1,\dots,n_3\}$, with mean zero and variance equal $2\%$ of its actual value. {In our case there are six modes, i.e. $n_3=6$}. By taking different values for {$n_2$}, we then generate ten instances whose number of variables and constraints are given in Table~\ref{table: inst}.
\begin{table}[h]
\centering
\caption{Instances description}
\label{table: inst}
\begin{tabular}{ccccc}
Inst & $\#$ of x-variables & $\#$ of y-variables & $\#$ of equations &  $\#$ of bilinear terms\\
\hline
inst1    & 6                   & 180                 & 312    &	990           \\
inst2    & 6                   & 180                 & 312    &	954           \\
inst3    & 6                   & 168                 & 312    &	966           \\
inst4    & 6                   & 168                 & 312    &	972           \\
inst5    & 6                   & 156                 & 312    &	900           \\
inst6    & 6                   & 144                 & 312    &	780          
\\
inst7    & 6                   & 132                 & 312    &	756          
\\
inst8    & 6                   & 132                 & 312    &	756          
\\
inst9    & 6                   & 120                 & 312    &	684          
\\
inst10    & 6                   & 120                 & 312    &	684          
\end{tabular}
\end{table}

\subsection{Simplifying $S^{SOCP}$}
\subsubsection{A lighter version of $S^{SOCP}$}\label{sec: lighter version}
According to Remark~\ref{rem:size}, the number of disjunction needed to model the convex hull of a single bilinear equation can be computationally prohibitive for many instances of interest. To overcome this issue, in our computational experiments, we write the convex hull of each row only in the space of the variable appearing in it. In particular, for constraint $k$ we work with $G(V^k, E^k)$, where $V^k$ is the set of variables appearing in constraint $k$ and $E^k$ represent the complete bipartite graph between the $x$ and $y$ variables appearing in $V^k$. This possibly weaker relaxation is much more computationally cheaper that $S^{SOCP}$ for our instances due to the sparsity on the coefficients of each bilinear equation. We denote this relaxation as $\textup{light}-S^{SOCP}$.

%When using the linear outer approximation described in Section~\ref{sec: Polyhedral relaxation}, every disjunction becomes a polytope with a few extreme points, at most four to be more precise. Thus, in our implementation, we use extreme points rather than linear inequalities to model each disjunction. Based on our experience, this seems to be more computationally efficient.
\subsubsection{Polyhedral outer approximation}\label{sec:Polyhedralrelaxation}

As shown in Proposition \ref{prop: hyp is SOCP} and Proposition \ref{prop: parabola is SOCP}, all the sets obtained after fixings are SOCP representable. Some are polyhedral while many of the others are not. Since linear programming techniques are more efficient and robust, than the non-linear counterpart, we outer approximate the non-polyhedral sets by polyhedral sets.

As shown in proof of Proposition~\ref{thm: convex of single eq is SOCP in the space of w}, all the non-linear sets that we need to convexify in order to obtain the convex hull of the set ${S}$ defined in (\ref{eq_single}) are of the form
\begin{align*}
S_{i_0j_0} = \{(x,y,w)\in[0,1]^{n_1+n_2+n_1n_2}\,|\, x_i, y_j\in\{0,1\}, \ \forall i\in V_1\setminus\{i_0\}, \ \forall j\in V_2\setminus \{j_0\}, \\ \bar{q} w_{i_0j_0} + \bar{a}x_{i_0} + \bar{b}y_{i_0} + \bar{c} = 0 , \ w_{ij} = x_{i} y_{j}, \ i \in V_1, \ j\in V_2\},
\end{align*}
for some $(i_0,j_0)\in E$.
Without loss of generality, suppose $i_0=1$ and $j_0=1$, in which case we want to outer approximate the following set
$
S_0 = \{(x_{1},y_{1},w_{11})\in{\rr}^3\,|\, q w_{1} + ax_{1} + by_{1} + c = 0, \ w_{11}=x_{1} y_{1}\}.
$
There are two cases of interest.  
The first case occurs when $q\neq 0$. In this case, $w_{11}$ is an affine functions of $x_{1}$ and $y_{1}$ as following: $w_{11} = (-c-ax_{1}-by_{1})/q$; $w_{1j} = x_{1}y_j, \ \forall j\in \{1, \dots, n_2\}$; $w_{i1} = x_{i}y_{1}, \ \forall i\in \{1, \dots, n_1\}$; and $w_{ij}=x_iy_j, \ \forall i\in \{1, \dots, n_1\}\setminus\{1\}, \ \forall j\in \{1, \dots, n_2\}\setminus\{1\}$. Hence, we only need to approximate $\convex(S_0)$ in the space of $(x_{1},y_{1})$. If both branches of the hyperbola defined by $q x_{1} y_{1} + ax_{1} + by_{1} + c = 0$ intersect the $[0,1]^2$ box, than $\convex(S_0)$ is polyhedral. Suppose only one branch of the hyperbola intersects the box. Then, we outer approximate $\convex(S_0)$ by using tangent lines to the curve. In our implementation, we only use the tangent lines at the intersection points of the curve with the box, see Figure~\ref{fig: hyperbola}. More tangent lines could be added to better approximate $\convex(S_0)$, but based on our preliminary experience on our instances it does not make significant difference. 

The second case of interest is $q = 0$ and $a\neq 0$ (or $b\neq 0$) for which we can rewrite $S_0$ as
$
S_0 = \{(x_{1},y_{1},w_{11})\in[0,1]^3\,|\, aw_{11} = -by_{1}^2-cy_{1}, \ ax_{1} = -by_{1}-c\}.
$
In this case, $x_{1}$ is an affine function of $y_{1}$ and we only need to approximate $\convex(S_0)$ in the space of $(y_{1},w_{11})$, where $ aw_{11} = -cy_{1}-by_{1}^2$ defines a parabola as shown in Figure~\ref{fig: parabola}. As in the previous case, we outer approximate the curve by using tangent lines to the curve as illustrated in Figure~\ref{fig: parabola}.
\begin{figure}[!h]
    \centering
    \begin{minipage}{0.48\textwidth}
        \centering
	\includegraphics[scale=0.8]{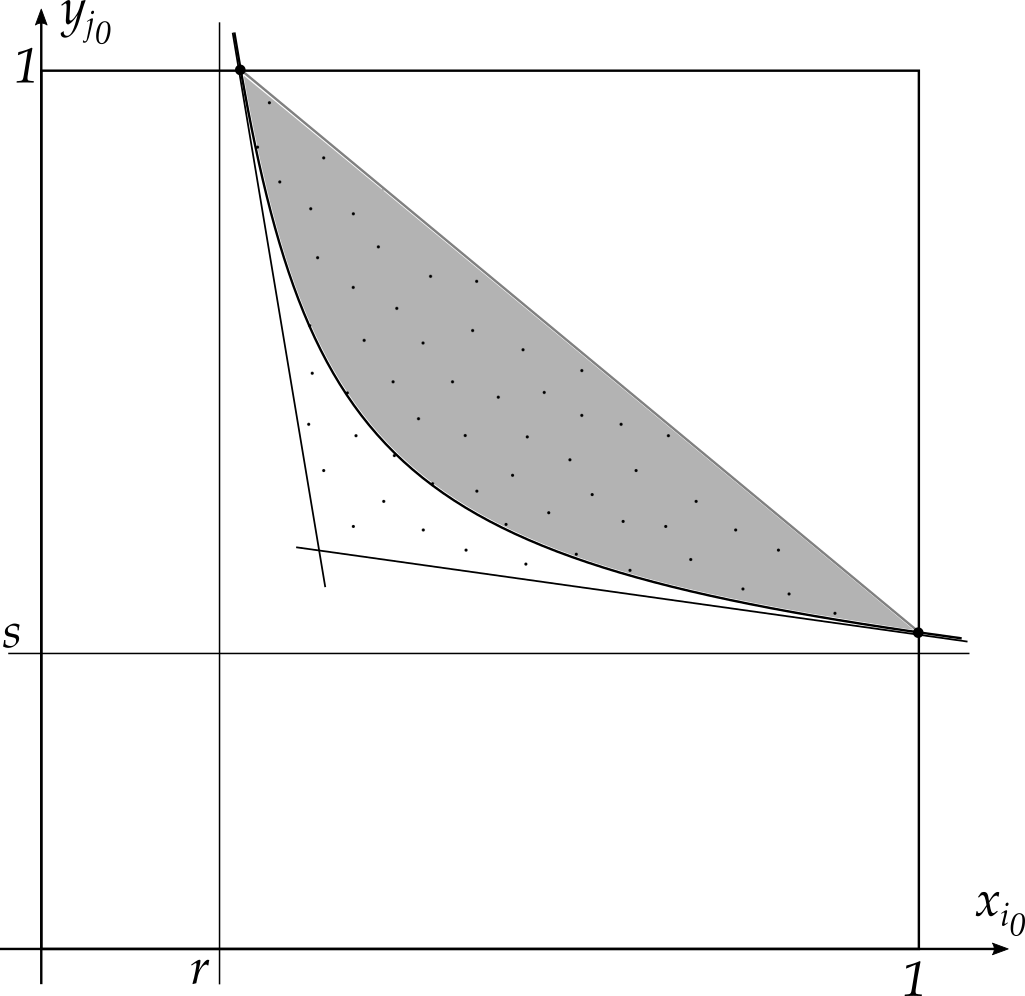}
        \caption{Convex hull of the set defined by the intersection of one branch of a hyperbola with the $[0,1]^2$ box, and its tangential linear outer approximation.}
        \label{fig: hyperbola}
    \end{minipage}%
    \hspace{0.3cm}
    \begin{minipage}{0.48\textwidth}
        \centering
        \includegraphics[scale=0.8]{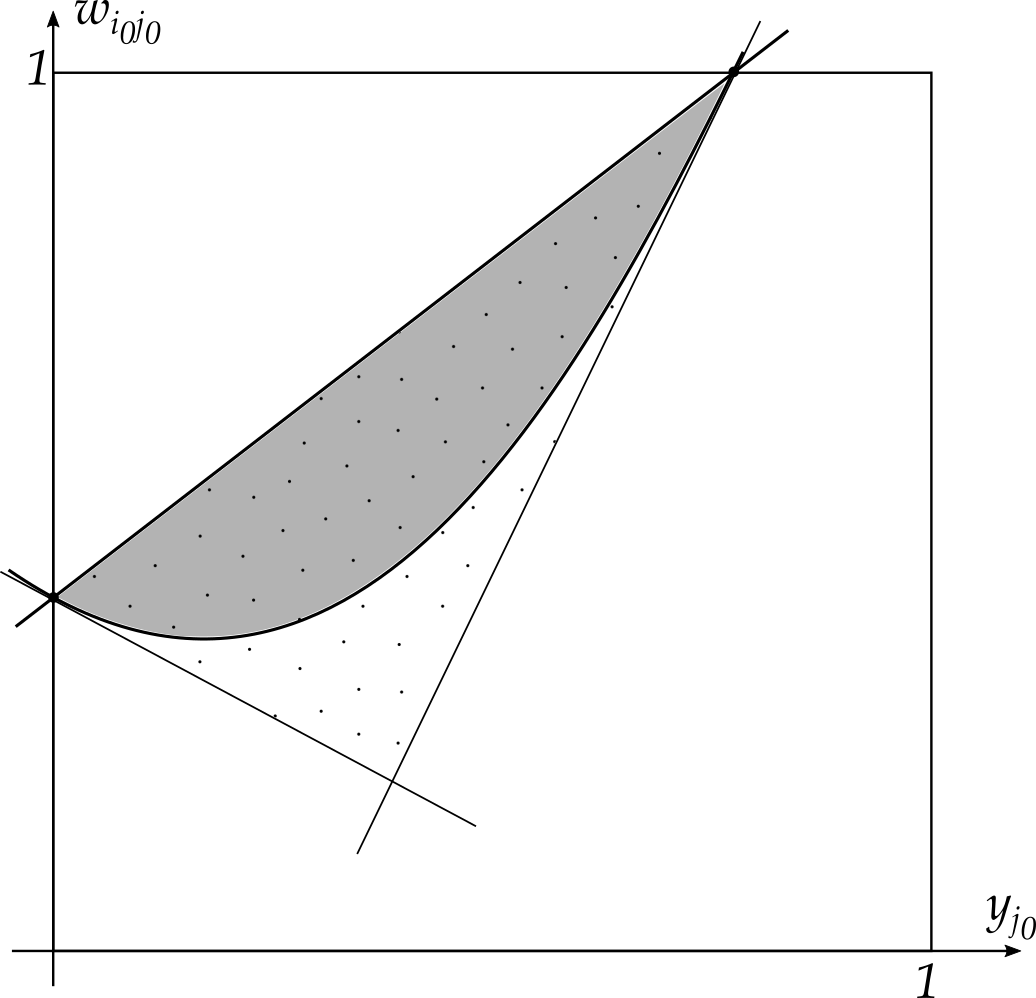} 
        \caption{Convex hull of the set defined by the intersection of parabola with the $[0,1]^2$ box, and its tangential linear outer approximation.\newline}
        \label{fig: parabola}
    \end{minipage}
\end{figure}

\subsection{Computation results}\label{sec:compres}

\subsubsection{Software and Hardware}
All of our experiments were ran on a Windows 10 machine with 64-bit operating system, x64 based processor with 2.19GHz, and 32GB RAM. We call MOSEK via CVX from MATLAB R2015b to solve SDPs. We used Gurobi 7.5.1 to solve LPs and integer programs. We used BARON 15.6.5 (with CPLEX 12.6 as LP solver and IPOPT as nonlinear solver) as our choice of commercial global solver, which we call from MATLAB R2015b.
\subsubsection{Root node}
We assess the strength of our proposed polyhedral outer approximation of $\textup{light}-S^{SOCP}$ relaxation (defined in Section~\ref{sec: lighter version} and referred as SOCP in the tables) against the classical SDP and McCormick (Mc) relaxations. The numerical results are reported in Table~\ref{table:Root relaxation comparisons}, where SDP+Mc denotes the the intersection of SDP and Mc relaxations. Similarly, SOCP+Mc denotes the intersection of SOCP and Mc relaxations (since we are not using $S^{SOCP}$, this could potentially be stronger than SOCP). %We solved SDP and SDP+Mc using MOSEK.
\begin{table}[h]
\centering
\caption{Root relaxations}
\label{table:Root relaxation comparisons}
\begin{tabular}{ccccccccccc}
     & \multicolumn{2}{c}{Mc} & \multicolumn{2}{c}{SDP} & \multicolumn{2}{c}{SDP+Mc} & \multicolumn{2}{c}{SOCP} & \multicolumn{2}{c}{SOCP+Mc} \\
Inst & Bound       & Time     & Bound        & Time     & Bound         & Time       & Bound        & Time      & Bound         & Time        \\
     \hline
1    & 0.17771     & 0.07     & 0.17771      & 1.81     & 0.17771       & 35.89      & 0.17793      & 17.59     & 0.17793       & 18.42       \\
2    & 0.00000     & 0.05     & 0.00000      & 1.70     & 0.00000       & 38.98      & 0.00000      & 20.93     & 0.00000       & 21.14       \\
3    & 0.27543     & 0.07     & 0.27194      & 1.81     & 0.27543       & 44.02      & 0.28202      & 16.22     & 0.28202       & 49.61       \\
4    & 0.10095     & 0.08     & 0.10012      & 2.14     & 0.10095       & 36.13      & 0.10101      & 20.71     & 0.10101       & 25.87       \\
5    & 0.34766     & 0.05     & 0.34766      & 1.67     & 0.34766       & 31.58      & 0.34925      & 13.17     & 0.34925       & 12.88       \\
6    & 0.97758     & 0.05     & 0.91629      & 1.80     & 0.97758       & 28.47      & 1.00267      & 11.64     & 1.00267       & 11.07      
\\
7  & 1.73437 & 0.07 & 1.70329 & 1.38 & 1.73437 & 25.29 & 1.74015 & 10.76 & 1.74015 & 11.68 \\
8  & 1.99887 & 0.07 & 1.97107 & 1.30 & 1.99887 & 21.95 & 2.01260 & 17.53 & 2.01260 & 21.51 \\
9  & 1.89400 & 0.05 & 1.89222 & 1.17 & 1.89400 & 22.94 & 1.90191 & 10.53 & 1.90191 & 9.32  \\
10 & 2.41036 & 0.05 & 2.40658 & 1.16 & 2.41036 & 18.95 & 2.41959 & 10.07 & 2.41959 & 12.29
\end{tabular}
\end{table}

As we see, SOCP produces the best dual bounds among SDP, Mc and SDP + Mc. Also, SOCPs runs faster than SDP + Mc for all the instances. Finally, SOCP+ Mc produces no better bounds than SOCPs alone. 

A strong relaxation can be obtained by partitioning the domain of some variables and writing a MILP formulation to model the union of McCormick relaxations over each piece~\cite{meyer2006global, dey2015analysis}.  We call it McCormick Discretization and use the MILP formulation with binary expansion. We only partition the domain of variables $x_i$'s as the number of $x$ variables is much smaller than the number of $y$ variables for all of our instances. In Table~\ref{table: Mc Disc}, $T$ defines the level of discretization, meaning that the range of each variable $x_i$ is partitioned into $2^T+1$ uniform sub-intervals. This relaxation becomes tighter as $T$ increases. However, the MILP that need to be solved becomes harder since the number of binary variables increases as a function of $T$. Thus, we give GUROBI a time limit of 10 hours, which is the amount of time given to all the branch-and-bound algorithm that we report in Section~\ref{sec: branch and  bound} below. Table~\ref{table: Mc Disc} reports the computational results, where the asterisk signalizes that GUROBI reached the time limit with the given level of discretization. {If this is the case, then} we report the MILP dual bound reported by the solver, which is a valid dual bound for our problem. The last column displays the best bound obtained among all the levels of discretizations reported.

\begin{table}[h]
\centering
\caption{McCormick discretization: dual bounds}
\label{table: Mc Disc}
\begin{tabular}{cccccccc}
Inst & T=6     & T=8     & T=10    & T=12*   & T=14*   & T=16*   & Best    \\
\hline
1    & 0.18611 & 0.20512 & 1.11852 & 1.85387 & 1.40586 & 0.96121 & 1.85387 \\
2    & 0.00000 & 0.03133 & 1.05662 & 2.14709 & 1.38374 & 0.04654 & 2.14709 \\
3    & 0.29443 & 0.33575 & 1.39375 & 2.14270 & 1.42642 & 1.42007 & 2.14270 \\
4    & 0.10524 & 0.11387 & 1.21446 & 2.44853 & 1.63495 & 1.27218 & 2.44853 \\
5    & 0.36159 & 0.47559 & 2.15416 & 3.40272 & 3.22915 & 2.67721 & 3.40272 \\
6    & 1.25052 & 2.61325 & 4.16459 & 4.06782 & 3.96512 & 3.78165 & 4.16459 \\
7    & 1.96682 & 2.17988 & 3.60737 & 4.92133 & 4.69632 & 4.47471 & 4.92133 \\
8    & 2.48886 & 2.69510 & 3.63400 & 4.81890 & 4.48014 & 4.19095 & 4.81890 \\
9    & 2.05584 & 2.42150 & 4.16064 & 5.54076 & 5.63110 & 5.15290 & 5.63110 \\
10   & 2.57751 & 2.80795 & 4.07475 & 5.40977 & 5.28173 & 5.16376 & 5.40977
\end{tabular}
\end{table}
Clearly, McCormick discretization produces better results than $SOCP$. Therefore, if one does not want to use branch and bound, then McCormick discretization is the best option. However, as we see in the next section, better dual bounds can be obtained by combining SOCP with the new proposed branch-and-bound algorithm.

\subsubsection{Branch-and-bound}\label{sec: branch and  bound}

We assess and compare the performance of the following methods:\\
- \textit{BB}: This stands for our implementation of a branch-and-bound algorithm coded in Python. We use GUROBI as LP solver and run IPOPT at each node to search for feasible solutions. Our algorithm uses best-bound-first as node selection and rectangular partitioning. We consider three variants that differ from each other based on the relaxation adopted in each node and in the way variables and branching points are selected:
\begin{itemize}
\item[-] \textit{SOCP-1}: Uses the polyhedral relaxation described in Section~\ref{sec:Polyhedralrelaxation} with variable selection and the branching point given by Algorithm~\ref{alg: branching rule}.
\item[-] \textit{SOCP-2}: Uses the same relaxation of BB-SOCP-1 above. The branching variable is selected according to the gap-error-rule explained in Section~\ref{sec: variable and branching point selection}. Then uses the incumbent-rule for branching point selection, whenever possible, otherwise uses the maximum-deviation-rule.
\item[-] \textit{SOCP-3}: Same as BB-SOCP-2 except that uses bisection for branching point selection.
\item[-] \textit{BB-Mc}: Uses McCormick relaxation with gap-error-rule as branching variable selection rule and bisection for branching point selection.
\end{itemize}
The dual bounds from our computational experiments are reported in Table~\ref{table: dual bounds}. The stopping criteria for all the methods was a time limit of 10 hours.

\begin{table}[h]
\centering
\caption{Branch-and-bound methods: dual bounds}
\label{table: dual bounds}
\begin{tabular}{ccccc}
Inst & BB-SOCP-1 & BB-SOCP-2 & BB-SOCP-3 & BB-Mc      \\
\hline
1    & 2.50744   & 0.18473   & 0.18228   & 0.18343 \\
2    & 2.86438   & 0.00000   & 0.00000   & 0.00000 \\
3    & 3.13078   & 0.29109   & 0.28983   & 0.28884 \\
4    & 3.11154   & 0.10526   & 0.10246   & 0.10410 \\
5    & 3.78958   & 0.35253   & 0.35392   & 0.35405 \\
6    & 4.63992   & 1.11105   & 1.09537   & 1.15191 \\
7  & 5.26603 & 1.99569 & 1.88331 & 1.94949 \\
8  & 5.13128 & 2.18546 & 2.18193 & 2.28761 \\
9  & 6.10860 & 2.17509 & 2.08068 & 2.10144 \\
10 & 5.77051 & 2.48039 & 2.45158 & 2.47965
\end{tabular}
\end{table}

The best dual bound for each instance is clearly given by BB-SOCP-1, which uses our proposed relaxation and branching rule. All the standard branching rules yield significantly worse bounds.

\subsubsection{McCormick relaxation with BB-SOCP-1 branching rules}

The computational results from Section~\ref{sec: branch and  bound}, suggest that the good performance of BB-SOCP-1 is highly dependent on its branching rules, defined according to Algorithm~\ref{alg: branching rule}. In this section we show that the branching rules of Algorithm~\ref{alg: branching rule} on them own are not enough to produce good dual bounds. 

Consider the variant of BB-SOCP-1, reffered as BB-SOCP-Mc, which uses only McCormick relaxation and the same branching rule given by Algorithm~\ref{alg: branching rule}. Thus, at each node, we collect data from each disjunction $S_0$, run Algorithm~\ref{alg: branching rule} to select the branching variable and the branching point, but we only use the McCormick inequalities to define the relaxation. 

In Table~\ref{table: SOCP1 vc SOCP_Mc}, we compare the performance of BB-SOCP-1 and BB-SOCP-Mc. It becomes clear that the strength of BB-SOCP-1 does not come only from the branching rules of Algorithm~\ref{alg: branching rule} but also from our proposed relaxation. The discrepancy in the performance of BB-SOCP-1 and BB-SOCP-Mc means that, as the algorithm goes down the tree, the SOCP relaxation becomes much tighter than the McCormick relaxation.

\begin{table}[h]
\centering
\caption{BB-SOCP-1 vs. McCormick relaxation with BB-SOCP-1 branching rules}
\label{table: SOCP1 vc SOCP_Mc}
\begin{tabular}{ccccc}
     & \multicolumn{2}{c}{BB-SOCP-1} & \multicolumn{2}{c}{BB-SOCP-Mc} \\
Inst & Dual Bound & Gap ($\%$) & Dual Bound & Gap ($\%$) \\
\hline
1    & 2.50744    & 27.9       & 0.19776    & 94.3       \\
2    & 2.86438    & 18.2       & 0.02752    & 99.2       \\
3    & 3.13078    & 14.9       & 0.30514    & 91.7       \\
4    & 3.11154    & 17.1       & 0.11188    & 97.0       \\
5    & 3.78958    & 8.3        & 0.40497    & 90.2       \\
6    & 4.63992    & 18.0       & 1.52070    & 73.1       \\
7    & 5.26603    & 6.0        & 2.26765    & 59.5       \\
8    & 5.13128    & 9.5        & 2.68861    & 52.6       \\
9    & 6.10860    & 1.5        & 2.51461           & 59.5       \\
10   & 5.77051    & 7.9        & 2.85232    & 54.2      
\end{tabular}
\end{table}

\subsubsection{Comparison of primal bounds and duality gaps}

Finally, we report in Table~\ref{table: primal bounds} a summary of the performance of BB-SOCP-1, McCormick Discretization, BARON and BB-Mc. Recall that the stopping criteria for all the methods was a time limit of 10 hours. Also recall that primal solutions for BB-SOCP-1 and BB-Mc are obtained using IPOPT.

%\begin{table}[h]
%\centering
%\caption{Branch-and-bound methods: primal bounds}
%\label{table: primal bounds}
%\begin{tabular}{cccccc}
%Inst & BB-SOCP-1 & BB-SOCP-2 & BB-SOCP-3 & BB-Mc      & BARON   \\
%\hline
%1    & \textit{3.47847}   & \textit{3.47847}   & \textit{3.47847}   & 3.47849 & 3.47887 \\ffv
%2    & 3.49983   & 3.84091   & 3.49983   & 3.49983 & \textit{3.49931} \\
%3    & \textit{3.68103}   & \textit{3.68103}   & 3.68104   & 3.73308 & 3.68306 \\
%4    & \textit{3.75223}   & \textit{3.75223}   & \textit{3.75223}   & 3.75225 & 3.75297 \\
%5    & \textit{4.13277}   & \textit{4.13277}   & 4.14086   & 4.28165 & 4.13541 \\
%6    & 5.66096   & 5.66096   & 5.66097   & 5.66096 & \textit{5.66053}
%\end{tabular}
%\end{table}

\begin{table}[h]
\centering
\caption{Primal bounds and duality gaps}
\label{table: primal bounds}
\resizebox{\textwidth}{!}{
\begin{tabular}{cccccccccccc}
 & \multicolumn{3}{c}{BB-SOCP-1}  & \multicolumn{2}{c}{Mc Disc} & \multicolumn{3}{c}{BARON}      & \multicolumn{3}{c}{BB-Mc}      \\
Inst & Dual    & Primal  & Gap($\%$) & Dual        & Gap($\%$)    & Dual    & Primal  & Gap($\%$) & Dual    & Primal  & Gap($\%$) \\
\hline
1    & 2.50744 & 3.47847 & 27.9       & 1.85387     & 46.7          & 0.33122 & 3.47887 & 90.5       & 0.18343 & 3.47849 & 94.7       \\
2    & 2.86438 & 3.49983 & 18.2       & 2.14709     & 38.6          & 0.52447 & 3.49931 & 85.0       & 0.00000 & 3.49983 & 100.0      \\
3    & 3.13078 & 3.68103 & 14.9       & 2.14270     & 41.8          & 0.47599 & 3.68306 & 87.1       & 0.28884 & 3.73308 & 92.3       \\
4    & 3.11154 & 3.75223 & 17.1       & 2.44853     & 34.7          & 0.78630 & 3.75297 & 79.0       & 0.10410 & 3.75225 & 97.2       \\
5    & 3.78958 & 4.13277 & 8.3        & 3.40272     & 17.7          & 0.38396 & 4.13541 & 90.7       & 0.35405 & 4.28165 & 91.7       \\
6    & 4.63992 & 5.66096 & 18.0       & 4.16459     & 26.4          & 2.26566 & 5.66053 & 60.0       & 1.15191 & 5.66096 & 79.7      \\
7  & 5.26603 & 5.60009 & 6.0  & 4.92133 & 12.1 & 3.07096 & 5.60020 & 45.2 & 1.94949 & 5.69318 & 65.8 \\
8  & 5.13128 & 5.67022 & 9.5  & 4.81890 & 15.0 & 2.70237 & 5.67025 & 52.3 & 2.28761 & 5.67252 & 59.7 \\
9  & 6.10860 & 6.20343 & 1.5  & 5.63110 & 9.2  & 3.67301 & 6.20346 & 40.8 & 2.10144 & 6.29365 & 66.6 \\
10 & 5.77051 & 6.26853 & 7.9  & 5.40977 & 13.1 & 2.94060 & 6.22639 & 52.8 & 2.47965 & 6.30477 & 60.7
\end{tabular}}
\end{table}

The primal bounds from all the three branch-and-bound methods are similar, suggesting that the solutions found are close to a global optimal. On the other hand, the dual bounds from BB-SOCP-1 are significantly better than the dual bounds from all the other methods, which can be seem by comparing the duality gaps. In particular, the duality gap from BB-SOCP-1 is considerably smaller than the duality gap from Mc Disc, even though we are reporting the best dual bound obtained among all the levels of discretizations $T=6,8,\cdots,16$, and the primal bound we use to compute the duality gap of Mc Disc is the best primal bound from BB-SOCP-1, BARON and BB-Mc. The standard branching, i.e., the McCormick relaxation with bisection, yields the worse performance for all the instances.

\section*{Acknowledgments}

The authors would like to thank Xinjun Dong in Civil and Environmental Engineering at Georgia Tech, for his
assistance with preparing the structural example data. Santanu S. Dey would like to acknowledge the discussion on a preliminary version of this paper at Dagstuhl workshop \# 18081, that helped improve the paper. 

Funding: This work was supported by the NSF CMMI [grant number 1149400]; the NSF CMMI [grant number 1150700]; and the CNPq-Brazil [grant number 248941/2013-5].

%\newpage
\bibliographystyle{plain}
\bibliography{sensor_model_bibfile_8}

\begin{thebibliography}{10}

\bibitem{al1983jointly}
Faiz~A. Al-Khayyal and James~E. Falk.
\newblock Jointly constrained biconvex programming.
\newblock {\em Mathematics of Operations Research}, 8(2):273--286, 1983.

\bibitem{ben2001lectures}
Aharon Ben-Tal and Arkadi Nemirovski.
\newblock {\em Lectures on modern convex optimization: analysis, algorithms,
  and engineering applications}.
\newblock SIAM, 2001.

\bibitem{Bodur2017}
Merve Bodur, Alberto Del~Pia, Santanu~S. Dey, Marco Molinaro, and Sebastian
  Pokutta.
\newblock Aggregation-based cutting-planes for packing and covering integer
  programs.
\newblock {\em Mathematical Programming}, Sep 2017.

\bibitem{BolandDKMR17}
Natashia Boland, Santanu~S. Dey, Thomas Kalinowski, Marco Molinaro, and Fabian
  Rigterink.
\newblock Bounding the gap between the mccormick relaxation and the convex hull
  for bilinear functions.
\newblock {\em Math. Program.}, 162(1-2):523--535, 2017.

\bibitem{Burer2014}
Samuel Burer, Sunyoung Kim, and Masakazu Kojima.
\newblock Faster, but weaker, relaxations for quadratically constrained
  quadratic programs.
\newblock {\em Computational Optimization and Applications}, 59(1):27--45, Oct
  2014.

\bibitem{Burer2009}
Samuel Burer and Adam~N. Letchford.
\newblock On nonconvex quadratic programming with box constraints.
\newblock {\em SIAM Journal on Optimization}, 20(2):1073--1089, 2009.

\bibitem{castro2015tightening}
Pedro~M. Castro.
\newblock Tightening piecewise mccormick relaxations for bilinear problems.
\newblock {\em Computers \& Chemical Engineering}, 72:300--311, 2015.

\bibitem{crowder1983solving}
Harlan Crowder, Ellis~L. Johnson, and Manfred Padberg.
\newblock Solving large-scale zero-one linear programming problems.
\newblock {\em Operations Research}, 31(5):803--834, 1983.

\bibitem{davarnia2017simultaneous}
Danial Davarnia, Jean-Philippe~P. Richard, and Mohit Tawarmalani.
\newblock Simultaneous convexification of bilinear functions over polytopes
  with application to network interdiction.
\newblock {\em SIAM Journal on Optimization}, 27(3):1801--1833, 2017.

\bibitem{dey2015analysis}
Santanu~S. Dey and Akshay Gupte.
\newblock Analysis of milp techniques for the pooling problem.
\newblock {\em Operations Research}, 63(2):412--427, 2015.

\bibitem{dey2017analysis}
Santanu~S. Dey, Marco Molinaro, and Qianyi Wang.
\newblock Analysis of sparse cutting planes for sparse milps with applications
  to stochastic milps.
\newblock {\em Mathematics of Operations Research}, 2017.

\bibitem{faria2011novel}
D{\'e}bora~C. Faria and Miguel~J. Bagajewicz.
\newblock Novel bound contraction procedure for global optimization of bilinear
  minlp problems with applications to water management problems.
\newblock {\em Computers \& chemical engineering}, 35(3):446--455, 2011.

\bibitem{galan1998optimal}
B.~Galan and I.~E. Grossmann.
\newblock Optimal design of distributed wastewater treatment networks.
\newblock {\em Industrial \& Engineering Chemistry Research},
  37(10):4036--4048, 1998.

\bibitem{gorski2007biconvex}
Jochen Gorski, Frank Pfeuffer, and Kathrin Klamroth.
\newblock Biconvex sets and optimization with biconvex functions: a survey and
  extensions.
\newblock {\em Mathematical Methods of Operations Research}, 66(3):373--407,
  2007.

\bibitem{akshayguptethesis}
Akshay Gupte.
\newblock {\em Mixed integer bilinear programming with applications to the
  pooling problem}.
\newblock PhD thesis, Georgia Institute of Technology, 2011.

\bibitem{GupteADC17}
Akshay Gupte, Shabbir Ahmed, Santanu~S. Dey, and Myun{-}Seok Cheon.
\newblock Relaxations and discretizations for the pooling problem.
\newblock {\em J. Global Optimization}, 67(3):631--669, 2017.

\bibitem{GupteKRW17}
Akshay Gupte, Thomas Kalinowski, Fabian Rigterink, and Hamish Waterer.
\newblock Extended formulations for convex hulls of graphs of bilinear
  functions.
\newblock {\em Unpublished}.

\bibitem{haverly1978studies}
Co~A. Haverly.
\newblock Studies of the behavior of recursion for the pooling problem.
\newblock {\em Acm sigmap bulletin}, (25):19--28, 1978.

\bibitem{Hillestad1980}
Richard~J. Hillestad and Stephen~E. Jacobsen.
\newblock Linear programs with an additional reverse convex constraint.
\newblock {\em Applied Mathematics and Optimization}, 6(1):257--269, Mar 1980.

\bibitem{kim2001second}
Sunyonga Kim and Masakazu Kojima.
\newblock Second order cone programming relaxation of nonconvex quadratic
  optimization problems.
\newblock {\em Optimization methods and software}, 15(3-4):201--224, 2001.

\bibitem{kocuk2017matrix}
Burak Kocuk, Santanu~S. Dey, and X.~Andy Sun.
\newblock Matrix minor reformulation and socp-based spatial branch-and-cut
  method for the ac optimal power flow problem.
\newblock {\em arXiv preprint arXiv:1703.03050}, 2017.

\bibitem{Linderoth2005}
Jeff Linderoth.
\newblock A simplicial branch-and-bound algorithm for solving quadratically
  constrained quadratic programs.
\newblock {\em Mathematical Programming}, 103(2):251--282, Jun 2005.

\bibitem{LuedtkeNL12}
James~R. Luedtke, Mahdi Namazifar, and Jeff Linderoth.
\newblock Some results on the strength of relaxations of multilinear functions.
\newblock {\em Math. Program.}, 136(2):325--351, 2012.

\bibitem{marchand2001aggregation}
Hugues Marchand and Laurence~A. Wolsey.
\newblock Aggregation and mixed integer rounding to solve mips.
\newblock {\em Operations research}, 49(3):363--371, 2001.

\bibitem{meyer2005convex}
Clifford~A. Meyer and Christodoulos~A. Floudas.
\newblock Convex envelopes for edge-concave functions.
\newblock {\em Mathematical programming}, 103(2):207--224, 2005.

\bibitem{meyer2006global}
Clifford~A. Meyer and Christodoulos~A. Floudas.
\newblock Global optimization of a combinatorially complex generalized pooling
  problem.
\newblock {\em AIChE journal}, 52(3):1027--1037, 2006.

\bibitem{nahapetyan2008bilinear}
Artyom~G. Nahapetyan.
\newblock Bilinear programming: applications in the supply chain management
  bilinear programming: Applications in the supply chain management.
\newblock In {\em Encyclopedia of Optimization}, pages 282--288. Springer,
  2008.

\bibitem{nguyen2013deriving}
Trang~T. Nguyen, Jean-Philippe~P. Richard, and Mohit Tawarmalani.
\newblock Deriving the convex hull of a polynomial partitioning set through
  lifting and projection.
\newblock Technical report, working paper, 2013.

\bibitem{nguyen2011convexification}
Trang~T. Nguyen, Mohit Tawarmalani, and Jean-Philippe~P. Richard.
\newblock Convexification techniques for linear complementarity constraints.
\newblock In {\em IPCO}, volume 6655, pages 336--348. Springer, 2011.

\bibitem{padberg1989boolean}
Manfred Padberg.
\newblock The boolean quadric polytope: some characteristics, facets and
  relatives.
\newblock {\em Mathematical programming}, 45(1-3):139--172, 1989.

\bibitem{Rikun1997}
Anatoliy~D. Rikun.
\newblock A convex envelope formula for multilinear functions.
\newblock {\em Journal of Global Optimization}, 10(4):425--437, Jun 1997.

\bibitem{Ryoo1996}
Hong~S. Ryoo and Nikolaos~V. Sahinidis.
\newblock A branch-and-reduce approach to global optimization.
\newblock {\em Journal of Global Optimization}, 8(2):107--138, Mar 1996.

\bibitem{Sahinidis2005}
Nikolaos~V. Sahinidis and Mohit Tawarmalani.
\newblock Accelerating branch-and-bound through a modeling language construct
  for relaxation-specific constraints.
\newblock {\em Journal of Global Optimization}, 32(2):259--280, Jun 2005.

\bibitem{speakman2017branching}
Emily Speakman and Jon Lee.
\newblock On branching-point selection for triple products in spatial
  branch-and-bound: the hull relaxation.
\newblock {\em arXiv preprint arXiv:1706.08438}, 2017.

\bibitem{Tawarmalani2013}
Mohit Tawarmalani and Jean-Philippe~P. Richard.
\newblock Decomposition techniques in convexification of inequalities.
\newblock {\em Technical report}, 2013.

\bibitem{tawarmalani2010strong}
Mohit Tawarmalani, Jean-Philippe~P. Richard, and Kwanghun Chung.
\newblock Strong valid inequalities for orthogonal disjunctions and bilinear
  covering sets.
\newblock {\em Mathematical Programming}, 124(1):481--512, 2010.

\bibitem{Tawarmalani2010}
Mohit Tawarmalani, Jean-Philippe~P. Richard, and Kwanghun Chung.
\newblock Strong valid inequalities for orthogonal disjunctions and bilinear
  covering sets.
\newblock {\em Mathematical Programming}, 124(1):481--512, Jul 2010.

\bibitem{tawarmalani2013explicit}
Mohit Tawarmalani, Jean-Philippe~P. Richard, and Chuanhui Xiong.
\newblock Explicit convex and concave envelopes through polyhedral
  subdivisions.
\newblock {\em Mathematical Programming}, pages 1--47, 2013.

\bibitem{tawarmalani2002convexification}
Mohit Tawarmalani and Nikolaos~V Sahinidis.
\newblock {\em Convexification and global optimization in continuous and
  mixed-integer nonlinear programming: theory, algorithms, software, and
  applications}, volume~65.
\newblock Springer Science \& Business Media, 2002.

\bibitem{tuy2016convex}
Hoang Tuy.
\newblock {\em Convex analysis and global optimization}, volume 110.
\newblock Springer, 2016.

\bibitem{Wang2015}
Dapeng Zhu, Xinjun Dong, and Yang Wang.
\newblock Substructure stiffness and mass updating through minimization of
  modal dynamic residuals.
\newblock {\em Journal of Engineering Mechanics}, 142(5):04016013, 2016.

\end{thebibliography}

\end{document}